\documentclass[11pt]{amsart}

\usepackage[T1]{fontenc}
\usepackage{lmodern}
\usepackage{amsmath,amssymb,mathtools}
\usepackage{microtype}
\usepackage{setspace}
\usepackage[colorlinks=true,citecolor=blue,linkcolor=blue,urlcolor=blue]{hyperref}
\hypersetup{
  pdftitle={Positive Toeplitz operators on pluriharmonic Fock spaces: Schatten criteria, sharp norm comparisons, and generalized weights},
  pdfauthor={Sam Looi},
  pdfsubject={Positive Toeplitz operators on pluriharmonic Fock spaces: Schatten criteria, sharp norm comparisons, and generalized weights},
  pdfkeywords={pluriharmonic Fock space, generalized Fock space, positive Toeplitz operator, Schatten class, Schatten criterion, Schatten norm, symmetrically normed ideal, Carleson measure}
}
\newtheorem{theorem}{Theorem}[section]
\newtheorem{lemma}[theorem]{Lemma}
\newtheorem{proposition}[theorem]{Proposition}
\newtheorem{corollary}[theorem]{Corollary}
\theoremstyle{remark}

\newcommand{\F}{F}
\newcommand{\PH}{\mathrm{PH}}
\newcommand{\Sp}{\mathcal S}
\newcommand{\C}{\mathbb C}
\newcommand{\dV}{\,dV}
\newcommand{\norm}[1]{\left\|#1\right\|}
\newcommand{\abs}[1]{\left|#1\right|}
\newcommand{\inner}[2]{\left\langle #1,#2\right\rangle}

\title[Toeplitz operators on pluriharmonic Fock spaces]{Positive Toeplitz operators on pluriharmonic Fock spaces: Schatten class criteria, sharp norm comparisons, and generalized weights}
\author{Sam Looi}
\address{California Institute of Technology, Pasadena, CA 91125, USA}
\email{looi@caltech.edu}
\date{}
\subjclass[2020]{Primary 47B35; Secondary 47B10, 46E22, 31C10}

\begin{document}
\raggedbottom

\begin{abstract}
Let $\mu$ be a positive Borel measure on $\mathbb C^n$. We prove that, for every $0<p<\infty$, the Toeplitz operator $T_\mu^{\mathrm{ph}}$ induced by $\mu$ on the pluriharmonic Fock space belongs to the Schatten class $\mathcal{S}_p$ if and only if the local mass function $z\mapsto\mu(B(z,r))$ belongs to $L^p(\mathbb C^n)$ for one, or equivalently every, $r>0$. For $n\geq2$, this resolves a conjecture of Jaguzovi\'c and Vujadinovi\'c, and the range $0<p<1$ is new in every dimension. Writing $T_\mu$ for the corresponding holomorphic Toeplitz operator, we obtain the sharp estimates
\[
 \|T_\mu\|_{\mathcal{S}_p}^p
 \leq \|T_\mu^{\mathrm{ph}}\|_{\mathcal{S}_p}^p
 \leq 2^{\max\{1,p\}} \|T_\mu\|_{\mathcal{S}_p}^p.
\]
We also prove a sharp comparison with constant two in every symmetrically normed ideal and an exact trace formula, using positivity and a $2\times2$ block decomposition whose diagonal blocks are $T_\mu$ and an antiunitary copy of its compression to the functions orthogonal to constants.

We then consider generalized Fock weights satisfying $m\,dd^c|z|^2\le dd^c\phi\le M\,dd^c|z|^2$. For the canonical holomorphic and antiholomorphic direct sum norm, the same Schatten and symmetrically normed ideal estimates hold. For the norm inherited from $L^2(\C^n,e^{-2\phi}dV)$, the local mass criterion also holds whenever $e^{-2\phi}$ is comparable to a generalized Fock weight invariant under the scalar circle action. Without further assumptions, the local mass criterion can fail for the inherited norm: in one complex dimension, we construct a weight of the form $\phi(z)=|z|^2/2+\operatorname{Re}q(z)$, with $q$ entire, and a finite positive measure whose local masses belong to every $L^p$, although the Toeplitz form on the inherited pluriharmonic space is unbounded.
\end{abstract}

\maketitle
\markboth{S. Looi}{TOEPLITZ OPERATORS ON PLURIHARMONIC FOCK SPACES}

\section{Introduction}
The Schatten class membership of a positive Toeplitz operator on holomorphic Fock, or Bargmann--Fock, space is characterized by the mass of balls of a fixed radius. Jaguzovi\'c and Vujadinovi\'c recently asked whether the same criterion holds on the pluriharmonic Fock space \cite[Introduction]{JaguzovicVujadinovic2026}. They proved the case $p=1$, established the remaining necessity for $p>1$, and proved sufficiency for a class of radial measures. 
We prove the characterization for every positive measure and every $0<p<\infty$, together with sharp comparisons between the holomorphic and pluriharmonic Schatten norms. For generalized Fock weights, we show that the choice of pluriharmonic norm is essential. The criterion and norm comparisons remain valid for the canonical holomorphic-antiholomorphic direct sum, whereas the criterion can fail for the norm inherited from weighted $L^2$, even for weights whose Levi form is identical to that of the standard Gaussian weight.

Fix $n\geq1$ and $\alpha>0$, and let $dV$ denote Lebesgue measure on $\C^n$. For a measurable set $E$, let $V(E)=\int_E\dV$ and
\begin{equation}\label{eq:gaussian-measure}
 d\mu_\alpha(z)=\frac{1}{(\pi\alpha)^n}
 e^{-\abs{z}^2/\alpha}\dV(z),
\end{equation}
which is a probability measure. The Fock space $\F^2_\alpha=\F^2_\alpha(\C^n)$ is the closed subspace of $L^2(\C^n,d\mu_\alpha)$ consisting of entire functions. A twice continuously differentiable function $f$ on $\C^n$ is pluriharmonic if
\[
 \frac{\partial^2f}{\partial z_j\,\partial\overline z_k}=0,
 \qquad 1\leq j,k\leq n.
\]
We write $\PH^2_\alpha=\PH^2_\alpha(\C^n)$ for the subspace of $L^2(\C^n,d\mu_\alpha)$ consisting of pluriharmonic functions; it is a closed subspace, as Lemma~\ref{lem:splitting} shows.

For a positive Borel measure $\mu$ on $\C^n$, we consider the Toeplitz form
\begin{equation}\label{eq:toeplitz-form-intro}
 (f,g)\mapsto
 \int_{\C^n}f(z)\overline{g(z)}
 e^{-\abs{z}^2/\alpha}\,d\mu(z)
\end{equation}
whenever the integral is absolutely convergent. We say that the Toeplitz form in \eqref{eq:toeplitz-form-intro} is bounded on a Hilbert space $H$ of functions if the integral is absolutely convergent for all $f,g\in H$ and
\[
 \abs{\int_{\C^n}f(z)\overline{g(z)}
 e^{-\abs{z}^2/\alpha}\,d\mu(z)}
 \leq C\norm{f}_H\norm{g}_H.
\]
In this case there is a unique bounded positive operator associated with the Toeplitz form. We write $T_\mu$ on $\F^2_\alpha$ and $T_\mu^{\mathrm{ph}}$ on $\PH^2_\alpha$; with the convention that inner products are linear in the first variable,
\begin{equation}\label{eq:representing-operator-intro}
 \inner{T_\mu f}{g}
 =\int_{\C^n}f(z)\overline{g(z)}
 e^{-\abs{z}^2/\alpha}\,d\mu(z),
\end{equation}
and the same identity with $T_\mu^{\mathrm{ph}}$ holds on $\PH^2_\alpha$.

For background on Fock spaces and Toeplitz operators, see \cite{Zhu2012}. Isralowitz and Zhu proved the holomorphic criterion for the full range $0<p<\infty$ for the classical Fock space in one dimension \cite[Theorems~4.4 and~5.4]{IsralowitzZhu2010}. Isralowitz, Virtanen, and Wolf extended it to generalized Fock spaces on $\C^n$ \cite[Theorems~2.7 and~3.2]{IsralowitzVirtanenWolf2015}.

For a compact operator $T$, let $s_1(T)\geq s_2(T)\geq\cdots$ be its singular values. For $0<p<\infty$, we write $T\in\Sp_p$ if
\[
 \norm{T}_{\Sp_p}^p
 =\sum_{k=1}^\infty s_k(T)^p<\infty.
\]
We set $\norm{T}_{\Sp_p}=+\infty$ when $T\notin\Sp_p$. For $0<p\leq1$, the Schatten quasi-norm satisfies the $p$-triangle inequality
\begin{equation}\label{eq:p-triangle}
 \norm{X+Y}_{\Sp_p}^p
 \leq\norm{X}_{\Sp_p}^p+\norm{Y}_{\Sp_p}^p
\end{equation}
for all compact operators $X$ and $Y$; see \cite{McCarthy1967}.

\begin{theorem}\label{thm:main}
Let $\mu$ be a positive Borel measure on $\C^n$, let $0<p<\infty$, and fix $r>0$. The following statements are equivalent:
\begin{enumerate}
\item The form in \eqref{eq:toeplitz-form-intro} is bounded on $\PH^2_\alpha$, and $T_\mu^{\mathrm{ph}}\in\Sp_p(\PH^2_\alpha)$.
\item The form in \eqref{eq:toeplitz-form-intro} is bounded on $\F^2_\alpha$, and $T_\mu\in\Sp_p(\F^2_\alpha)$.
\item The function $z\mapsto\mu(B(z,r))$ belongs to $L^p(\C^n,dV)$.
\end{enumerate}
The third condition is independent of $r>0$. Whenever these equivalent conditions hold, the following estimates also hold:
\begin{equation}\label{eq:quantitative-estimate}
 \norm{T_\mu}_{\Sp_p}^p
 \leq \norm{T_\mu^{\mathrm{ph}}}_{\Sp_p}^p
 \leq 2^{\max\{1,p\}}\norm{T_\mu}_{\Sp_p}^p.
\end{equation}
\end{theorem}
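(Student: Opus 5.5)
The plan is to reduce everything to the holomorphic case via a $2\times2$ block decomposition, and then invoke the known holomorphic criterion (Isralowitz--Zhu, Isralowitz--Virtanen--Wolf) for the equivalence (2)$\Leftrightarrow$(3) and the independence of $r$.

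\emph{Step 1: splitting.} By Lemma~\ref{lem:splitting}, $\PH^2_\alpha=\F^2_\alpha\oplus\overline{\F^2_{\alpha,0}}$ orthogonally, where $\F^2_{\alpha,0}$ is the subspace of functions vanishing at the origin. The orthogonality holds because $\langle z^a,\overline{z}^b\rangle_{L^2(d\mu_\alpha)}=0$ unless $a=b=0$, by rotation invariance. Let $C f=\overline f$ be the antiunitary map $\F^2_{\alpha,0}\to\overline{\F^2_{\alpha,0}}$, and let $P_0$ be the projection onto $\F^2_{\alpha,0}$. Since $e^{-|z|^2/\alpha}\,d\mu$ is real, the Toeplitz form on $\PH^2_\alpha$ has block matrix
\[
T_\mu^{\mathrm{ph}}=\begin{pmatrix} T_\mu & B\\ B^* & C\,(P_0T_\mu P_0)\,C^{-1}\end{pmatrix}.
\]
The lower-right block is an antiunitary conjugate of the compression $P_0T_\mu P_0$, so its singular values are those of that compression.

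\emph{Step 2: boundedness and comparisons.} If the form is bounded on $\PH^2_\alpha$, restricting it to $\F^2_\alpha$ gives boundedness there. Conversely, $|f+\overline g|^2\le 2(|f|^2+|g|^2)$ gives boundedness on $\PH^2_\alpha$ with constant $2$, because $g\mapsto\overline g$ preserves the form value $\int|g|^2e^{-|z|^2/\alpha}d\mu$.

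For the lower bound in \eqref{eq:quantitative-estimate}: $T_\mu$ is a compression of the positive operator $T_\mu^{\mathrm{ph}}$. By Cauchy interlacing ($s_k(PAP)\le s_k(A)$ for $A\ge0$), $\|T_\mu\|_{\Sp_p}\le\|T_\mu^{\mathrm{ph}}\|_{\Sp_p}$. This gives (1)$\Rightarrow$(2).

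For the upper bound, use positivity: a positive block matrix $\begin{pmatrix}A&B\\B^*&D\end{pmatrix}$ satisfies $\|\cdot\|_{\Sp_p}^p\le 2^{\max\{0,p-1\}}(\|A\|_{\Sp_p}^p+\|D\|_{\Sp_p}^p)$. To prove it, write the operator as $X^*X$ with $X=(X_1\ X_2)$, so that $A=X_1^*X_1$ and $D=X_2^*X_2$. Then $X^*X$ has the same nonzero spectrum as $XX^*=X_1X_1^*+X_2X_2^*$. For $p\le1$ apply the $p$-triangle inequality \eqref{eq:p-triangle}; for $p\ge1$ apply the triangle inequality together with convexity, $(a+b)^p\le 2^{p-1}(a^p+b^p)$. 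Combining this with $\|D\|_{\Sp_p}\le\|T_\mu\|_{\Sp_p}$ (again by interlacing) yields
\[
\|T_\mu^{\mathrm{ph}}\|_{\Sp_p}^p\le 2^{\max\{0,p-1\}}\cdot2\|T_\mu\|_{\Sp_p}^p=2^{\max\{1,p\}}\|T_\mu\|_{\Sp_p}^p .
\]
This gives (2)$\Rightarrow$(1).

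\emph{Step 3: holomorphic criterion.} For (2)$\Leftrightarrow$(3), quote the holomorphic theorem. The measure there is $e^{-|z|^2/\alpha}d\mu$ paired against the weight $e^{-|z|^2/\alpha}$, so conventions need matching. The form $\int|f|^2e^{-|z|^2/\alpha}d\mu$ corresponds exactly to the standard Toeplitz operator with symbol measure $\mu$ (up to a $(\pi\alpha)^n$ factor). Hence the criterion is stated in terms of $\mu(B(z,r))$ itself, for all $0<p<\infty$.

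The independence of $r$ follows by covering $B(z,R)$ by boundedly many balls $B(z+w_j,r)$ with fixed translates $w_j$, and using translation invariance of the $L^p$ norm.

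The main obstacle I expect is making sure the holomorphic references cover general positive measures for which boundedness is not assumed a priori (i.e.\ that (3) implies boundedness of the form). If needed, I would handle this directly: first show that (3) implies $\sup_z\mu(B(z,r))<\infty$, then apply the standard Fock Carleson estimate $|f(z)|^2e^{-|z|^2/\alpha}\lesssim\int_{B(z,r)}|f|^2e^{-|w|^2/\alpha}dV$.
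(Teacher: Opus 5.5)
Your proposal is correct and follows the paper's route. Taking $X=B^{1/2}$, your factorization $B=X^*X$ with $XX^*=X_1X_1^*+X_2X_2^*$ is exactly the paper's decomposition $B=B^{1/2}P_1B^{1/2}+B^{1/2}P_2B^{1/2}$; you combine it, as the paper does, with the antiunitary identification of the second diagonal block and the compression argument for the lower bound. The one step to add is that the cited holomorphic theorem's auxiliary integrability hypothesis must also be verified in the direction (2)$\Rightarrow$(3), not only (3)$\Rightarrow$(2). The paper does this by testing the bounded $T_\mu$ on normalized kernels $k_z$ to get $\sup_z\mu(B(z,r))<\infty$.
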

The constants $1$ and $2^{\max\{1,p\}}$ in \eqref{eq:quantitative-estimate} are optimal; see Proposition~\ref{prop:sharp-constants}.

The factorization used in the proof gives a second comparison that is independent of $p$. A symmetric norming function $\Phi$ is a norm on finitely supported sequences, normalized by $\Phi(1,0,\ldots)=1$, that is unchanged when the entries are permuted or replaced by their absolute values. For a compact operator $T$, set
\begin{equation}\label{eq:phi-norm}
 \norm{T}_\Phi
 =\sup_{N\geq1}
 \Phi\bigl(s_1(T),\ldots,s_N(T),0,\ldots\bigr).
\end{equation}
We use the convention that the associated symmetrically normed ideal $\mathcal I_\Phi$ consists of the compact operators for which \eqref{eq:phi-norm} is finite. For $1\leq p<\infty$, the choice
\[
 \Phi_p(x)=\left(\sum_k\abs{x_k}^p\right)^{1/p}
\]
gives the Schatten norm; $p=1$ is the trace norm and $p=2$ is the Hilbert--Schmidt norm. The choice $\Phi_\infty(x)=\max_k\abs{x_k}$ gives the operator norm on compact operators. The Schatten quasi-norms for $0<p<1$ do not arise from symmetric norming functions. We refer to \cite{Simon2005} for the general theory.

\begin{corollary}[Symmetrically normed ideals]
\label{cor:symmetric-ideals}
Let $\mu$ be a positive Borel measure on $\C^n$. The holomorphic Toeplitz form is bounded and $T_\mu\in\mathcal I_\Phi$ if and only if the pluriharmonic Toeplitz form is bounded and $T_\mu^{\mathrm{ph}}\in\mathcal I_\Phi$. Whenever these conditions hold,
\begin{equation}\label{eq:symmetric-norm-estimate}
 \norm{T_\mu}_\Phi
 \leq\norm{T_\mu^{\mathrm{ph}}}_\Phi
 \leq2\norm{T_\mu}_\Phi.
\end{equation}
More precisely, for every $N\geq1$,
\begin{equation}\label{eq:ky-fan-comparison}
 \sum_{k=1}^N s_k(T_\mu^{\mathrm{ph}})
 \leq2\sum_{k=1}^N s_k(T_\mu).
\end{equation}
Both constants in \eqref{eq:symmetric-norm-estimate} are optimal.
\end{corollary}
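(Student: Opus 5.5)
The plan is to work with the orthogonal splitting of Lemma~\ref{lem:splitting}:
\[
\PH^2_\alpha=\F^2_\alpha\oplus\overline{\F^2_{\alpha,0}},
\]
where $\F^2_{\alpha,0}$ consists of the functions in $\F^2_\alpha$ orthogonal to the constants. Complex conjugation $J f=\overline f$ is an antiunitary map from $\F^2_{\alpha,0}$ onto the second summand. Let $P_0$ be the orthogonal projection of $\F^2_\alpha$ onto $\F^2_{\alpha,0}$, and write $T_{\mu,0}=P_0T_\mu|_{\F^2_{\alpha,0}}$. Relative to the splitting, the positive form on $\PH^2_\alpha$ has a $2\times2$ block matrix. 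Its diagonal blocks are $T_\mu$ and $JT_{\mu,0}J^{-1}$. The second identification holds because
\[
\int \overline f\, g\, e^{-|z|^2/\alpha}\,d\mu=\overline{\langle T_\mu f,g\rangle}
\]
for $f,g\in\F^2_{\alpha,0}$.

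Boundedness transfers in both directions. If the pluriharmonic form is bounded, then so is its restriction to $\F^2_\alpha$. Conversely, for $u=f+\overline g$, Cauchy--Schwarz in $L^2(e^{-|z|^2/\alpha}d\mu)$ gives
\[
\int|u|^2\,e^{-|z|^2/\alpha}\,d\mu\le 2\int\bigl(|f|^2+|g|^2\bigr)e^{-|z|^2/\alpha}\,d\mu,
\]
and the right-hand side is bounded by $2\|T_\mu\|\,\|u\|^2$.

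For the key operator inequality, write $T^{\mathrm{ph}}_\mu=A^*A$, where $A:\PH^2_\alpha\to L^2(e^{-|z|^2/\alpha}d\mu)$ is the inclusion. Then
\[
A=A_1P_1+A_2P_2
\]
with $P_1,P_2$ the projections onto the two summands, so
\[
A^*A\le 2\bigl(P_1A_1^*A_1P_1+P_2A_2^*A_2P_2\bigr)=2\bigl(T_\mu\oplus JT_{\mu,0}J^{-1}\bigr).
\]
The two pieces $P_jA_j^*A_jP_j$ are positive operators with $s$-numbers dominated by those of $T_\mu$ (since $T_{\mu,0}$ is a compression), and they add exactly to $A_1^*A_1\oplus A_2^*A_2$ under the splitting.

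The lower bounds come from compression: $T_\mu=P_1T^{\mathrm{ph}}_\mu P_1$, so $s_k(T_\mu)\le s_k(T^{\mathrm{ph}}_\mu)$ for every $k$, and therefore $\|T_\mu\|_\Phi\le\|T^{\mathrm{ph}}_\mu\|_\Phi$. This also shows that membership of $T^{\mathrm{ph}}_\mu$ in $\mathcal I_\Phi$ forces membership of $T_\mu$. For the upper bound I would avoid the nonlinear-looking $A^*A$ estimate and instead use the identity
\[
T^{\mathrm{ph}}_\mu=A^*A\quad\text{and}\quad AA^*=A_1A_1^*+A_2A_2^*.
\]
Nonzero singular values of $A^*A$ and $AA^*$ coincide, and $A_jA_j^*$ has the same nonzero eigenvalues as $A_j^*A_j$, namely $T_\mu$ and $T_{\mu,0}$. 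Ky Fan's inequality for sums then gives
\[
\sum_{k\le N}s_k(T^{\mathrm{ph}}_\mu)\le\sum_{k\le N}s_k(T_\mu)+\sum_{k\le N}s_k(T_{\mu,0})\le 2\sum_{k\le N}s_k(T_\mu),
\]
which is \eqref{eq:ky-fan-comparison}. Compactness transfers as well, since $AA^*$ is a sum of two compact operators. Finally, a symmetric norming function is monotone under weak majorization: by the Ky Fan dominance theorem (see \cite{Simon2005}), the partial-sum inequality yields
\[
\Phi\bigl(s_1(T^{\mathrm{ph}}_\mu),\ldots,s_N(T^{\mathrm{ph}}_\mu)\bigr)\le 2\,\Phi\bigl(s_1(T_\mu),\ldots,s_N(T_\mu)\bigr),
\]
and taking the supremum over $N$ gives \eqref{eq:symmetric-norm-estimate}.

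I expect optimality of the constants to be the main obstacle. For the lower constant $1$, I would take $\mu$ a point mass at $0$ (suitably normalized). Then $AA^*$ is rank one, $T_{\mu,0}=0$ since elements of $\F^2_{\alpha,0}$ vanish at $0$, and $T^{\mathrm{ph}}_\mu$ and $T_\mu$ have the same single eigenvalue, so equality holds. For the constant $2$, the norm $\Phi_\infty$ suffices. I would take $\mu$ a point mass at a point $a$ with $|a|\to\infty$. Then $A_1A_1^*$ and $A_2A_2^*$ are multiplication by the reproducing kernel values $K(a,a)$ and $K(a,a)-1$ respectively, so the ratio
\[
\frac{K(a,a)+K(a,a)-1}{K(a,a)}\longrightarrow 2,
\]
which shows that the constant $2$ cannot be improved. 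Presumably Proposition~\ref{prop:sharp-constants} handles this; the kernel computation at a point mass is what I would check first.
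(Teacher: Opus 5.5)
Your argument is correct and is essentially the paper's. The paper writes $T_\mu^{\mathrm{ph}}=B=\sum_j B^{1/2}P_jB^{1/2}$ and matches each summand with $P_jBP_j$. You instead factor through the embedding $A$ into $L^2(e^{-|z|^2/\alpha}d\mu)$ and use $AA^*=\sum_j AP_jA^*$; the two factorizations differ only by the partial isometry in $A=UB^{1/2}$. You also pass from the Ky Fan partial sums to $\|\cdot\|_\Phi$ by Ky Fan dominance, where the paper uses the triangle inequality for $\|\cdot\|_\Phi$.

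One point of emphasis: the point-mass operators have rank one, so $\|X\|_\Phi=s_1(X)$ for every normalized $\Phi$. Your computation of $2-e^{-|a|^2/\alpha}\to 2$ therefore shows that $2$ is optimal for each $\Phi$, as the corollary asserts, and not merely for $\Phi_\infty$.
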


The passage to generalized Fock weights raises a distinction absent in the Gaussian setting: the norm defined by the holomorphic-antiholomorphic direct sum and the norm inherited from weighted $L^2$ coincide for the Gaussian weight, but need not even be equivalent for a general weight. We first establish the Schatten criterion and norm comparisons for the canonical direct sum. We then determine conditions under which these results transfer to the inherited space and show that the usual Levi bounds alone do not suffice.

Let the weight $\phi\in C^2(\C^n;\mathbb R)$ satisfy
\[
 c\omega_0\leq dd^c\phi\leq C\omega_0,
 \qquad \omega_0=dd^c\abs{z}^2,
 \qquad c,C>0,
\]
and let $\F_\phi^2$ be the space of entire functions in $L^2(\C^n,e^{-2\phi}dV)$, with norm $\norm{\cdot}_\phi$. We use the following notation:
\[
 \mathcal C_\phi=\F_\phi^2\cap\C,
 \qquad
 \F_{\phi,\circ}^2=\F_\phi^2\ominus\mathcal C_\phi.
\]
The canonical pluriharmonic direct sum is
\[
 \PH_{\phi,\oplus}^2
 =\{g+\overline h:g\in\F_\phi^2,\ h\in\F_{\phi,\circ}^2\},
 \qquad
 \norm{g+\overline h}_{\phi,\oplus}^2
 =\norm{g}_\phi^2+\norm{h}_\phi^2.
\]
Removing $\mathcal C_\phi$ from the second summand makes this representation unique. We also consider the space
\[
 \PH_{\phi,L^2}^2
 =\{f\in L^2(\C^n,e^{-2\phi}dV):f\text{ is pluriharmonic}\}
\]
with its inherited weighted $L^2$ norm. 
These two norms agree for the Gaussian weight, but they need not be equivalent for a general $\phi$. This distinction matters because the Schatten class characterization transfers to the inherited space whenever the two constructions yield the same functions with equivalent norms; the Levi bounds alone, however, do not guarantee such an equivalence.

For a positive Borel measure $\mu$, let $T_{\mu,\phi}$ and $T_{\mu,\phi}^{\mathrm{ph},\oplus}$ denote the operators representing
\[
 q_{\mu,\phi}(f,u)
=\int_{\C^n}f(z)\overline{u(z)}e^{-2\phi(z)}\,d\mu(z)
\]
on $\F_\phi^2$ and $\PH_{\phi,\oplus}^2$, respectively, whenever the corresponding forms are bounded.

\begin{theorem}[Generalized weights]\label{thm:generalized-intro}
Let $\phi$ satisfy the Levi bounds above, and let $\mu$ be a positive Borel measure on $\C^n$. For every $0<p<\infty$ and every $r>0$,
\[
 T_{\mu,\phi}^{\mathrm{ph},\oplus}\in\Sp_p
 \quad\Longleftrightarrow\quad
 T_{\mu,\phi}\in\Sp_p
 \quad\Longleftrightarrow\quad
 \mu(B(\cdot,r))\in L^p(\C^n,dV),
\]
where membership assertions include boundedness of the defining forms. Whenever these conditions hold,
\[
 \norm{T_{\mu,\phi}}_{\Sp_p}^p
 \leq\norm{T_{\mu,\phi}^{\mathrm{ph},\oplus}}_{\Sp_p}^p
 \leq2^{\max\{1,p\}}\norm{T_{\mu,\phi}}_{\Sp_p}^p.
\]
Membership is also equivalent in every symmetrically normed ideal, with
\[
 \norm{T_{\mu,\phi}}_\Phi
 \leq\norm{T_{\mu,\phi}^{\mathrm{ph},\oplus}}_\Phi
 \leq2\norm{T_{\mu,\phi}}_\Phi.
\]
The constants are optimal uniformly over this class of weights.

If $\phi(e^{i\theta}z)=\phi(z)$ for all $z\in\C^n$ and $\theta\in\mathbb R$, then $\PH_{\phi,\oplus}^2=\PH_{\phi,L^2}^2$ with equality of norms, and the same conclusions hold on the inherited space. More generally, the local mass Schatten criterion holds on $\PH_{\phi,L^2}^2$ whenever
\[
 m e^{-2\psi(z)}\leq e^{-2\phi(z)}\leq M e^{-2\psi(z)}
 \qquad (z\in\C^n)
\]
for some $0<m\leq M<\infty$ and a weight $\psi$ satisfying the same type of Levi bounds and $\psi(e^{i\theta}z)=\psi(z)$.
\end{theorem}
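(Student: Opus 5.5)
The plan is to carry over the $2\times2$ block argument behind Theorem~\ref{thm:main} to the canonical direct sum, and then pass to the inherited norm through equivalence of norms. Let $J\colon\F_{\phi,\circ}^2\to\PH^2_{\phi,\oplus}$ be $h\mapsto\overline h$; it is antiunitary by the definition of $\norm{\cdot}_{\phi,\oplus}$. Let $\iota\colon\F^2_\phi\to\PH^2_{\phi,\oplus}$ be the isometric inclusion. Then $\PH^2_{\phi,\oplus}=\iota\F^2_\phi\oplus J\F^2_{\phi,\circ}$ orthogonally. For $f=g+\overline h$ we have
\[
q_{\mu,\phi}(f,f)\le 2\int(\abs g^2+\abs h^2)e^{-2\phi}d\mu .
\]
Hence boundedness on $\F^2_\phi$ implies boundedness on the direct sum. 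The converse is immediate by restricting to $g$.

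Next I would write the positive operator as $T^{\mathrm{ph},\oplus}_{\mu,\phi}=A^*A$, where $A\colon\PH^2_{\phi,\oplus}\to L^2(\mu e^{-2\phi})$ is the embedding. With respect to the orthogonal splitting, $A=A_1\oplus A_2$, where $A_1=A\iota$ and $A_2=AJ$. Here $A_1^*A_1=T_{\mu,\phi}$. The second diagonal block is antiunitarily equivalent to the compression $P_\circ T_{\mu,\phi}P_\circ$, because $\int\abs{\overline h}^2=\int\abs h^2$. Positivity then gives
\[
A^*A\le 2\,(A_1^*A_1\oplus A_2^*A_2)
\]
as quadratic forms, since $\abs{a+b}^2\le2\abs a^2+2\abs b^2$. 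By Weyl monotonicity, $s_k(T^{\mathrm{ph}})\le 2 s_k(D)$, where $D=T_{\mu,\phi}\oplus(\text{copy of }P_\circ T_{\mu,\phi}P_\circ)$.

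For the Schatten bounds I would use $AA^*=A_1A_1^*+A_2A_2^*$, which has the same nonzero spectrum as $A^*A$. For $p\ge1$, Minkowski gives
\[
\norm{A^*A}_p\le\norm{A_1A_1^*}_p+\norm{A_2A_2^*}_p\le 2\norm{T_{\mu,\phi}}_p .
\]
For $p<1$, the $p$-triangle inequality \eqref{eq:p-triangle} gives the constant $2$. In both cases I also use that compression does not increase singular values, by Cauchy interlacing. This yields the factor $2^{\max\{1,p\}}$. The same computation, now with the Ky Fan sums and the triangle inequality for $\Phi$, gives $2\norm{T_{\mu,\phi}}_\Phi$. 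The lower bound follows because $T_{\mu,\phi}$ is a compression of $T^{\mathrm{ph},\oplus}$, via $\iota^*T^{\mathrm{ph}}\iota$, and interlacing. The equivalence with the local mass condition is then the Isralowitz--Virtanen--Wolf theorem for $T_{\mu,\phi}$. For optimality uniformly over weights, the Gaussian weight already belongs to the class, so the examples of Proposition~\ref{prop:sharp-constants} apply.

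For the inherited space, assume $\phi$ is circle-invariant. Take $g\in\F^2_\phi$ and $h\in\F^2_{\phi,\circ}$, and expand both into homogeneous polynomial pieces. Integrating over the circle action, $\int g\,h\,e^{-2\phi}$ reduces to the pairing of degree-$k$ parts of $g$ with degree-$(-k)$ parts of $h$. Since $h$ has no constant term, this vanishes, so $\inner{g}{\overline h}_{L^2}=0$. Convergence is justified by approximation with polynomials, which are dense in $\F^2_\phi$ for circle-invariant weights by the Taylor expansion in rotations. This gives equality of norms. To see that every pluriharmonic $L^2$ function decomposes as $g+\overline h$, I would use that on $\C^n$ we can write $f=g+\overline h$ with $g,h$ entire and $h(0)=0$. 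Averaging over the circle action shows that each homogeneous piece lies in $L^2$, with Parseval-type orthogonality between pieces. Hence $g$ and $h$ lie in $\F^2_\phi$.

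For a comparable weight $\psi$, the spaces $\PH^2_{\phi,L^2}=\PH^2_{\psi,L^2}$ coincide with equivalent norms. The identity map is therefore an invertible operator $S$ with $T^{\phi}=S^*T^{\psi}_{\tilde\mu}S$, where $\tilde\mu=e^{2\psi-2\phi}\mu$ has comparable local masses. Membership in $\Sp_p$ is preserved, which gives the criterion. I expect the main obstacle to be the completeness step in the circle-invariant case: proving that the homogeneous decomposition converges in the weighted $L^2$ norm and that $g$ and $h$ individually belong to $\F^2_\phi$.
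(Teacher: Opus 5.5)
Your proposal is correct. The direct-sum part is essentially the paper's argument. Factoring through $L^2(e^{-2\phi}d\mu)$ and using $AA^*=A_1A_1^*+A_2A_2^*$ is the same device as the paper's $B=B^{1/2}P_1B^{1/2}+B^{1/2}P_2B^{1/2}$: via $A=UB^{1/2}$ one has $AP_jA^*=UB^{1/2}P_jB^{1/2}U^*$. The compression bounds and the Gaussian optimality examples also match the paper's. When you cite Isralowitz--Virtanen--Wolf, you should also check two things, as the paper does:
\begin{itemize}
\item their hypothesis $\int_{\C^n}e^{-\gamma\abs{z-w}}\,d\mu(w)<\infty$, which follows from a uniform bound on $\mu(B(z,\delta))$; that bound comes either from testing the bounded form on normalized kernels or from the $L^p$ local mass condition;
\item that their kernel-defined operator agrees with the form-defined $T_{\mu,\phi}$.
\end{itemize}

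For the inherited space you take different routes from the paper.

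In the circle-invariant case, the paper uses neither homogeneous expansions nor polynomial density. It applies the identity $\int_{B(0,R)}v\,e^{-2\phi}\dV=v(0)\int_{B(0,R)}e^{-2\phi}\dV$ for entire $v$ to $v=gh$ on balls. This gives $\int_{B(0,R)}\abs{f}^2e^{-2\phi}\dV=\int_{B(0,R)}(\abs{g}^2+\abs{h}^2)e^{-2\phi}\dV$, and monotone convergence yields $g,h\in\F^2_\phi$ and the norm identity at once. With $v=h$, the same identity gives $\inner{h}{1}_\phi=h(0)\norm{1}_\phi^2$. This is what identifies $\F^2_{\phi,\circ}$ with $\{h:h(0)=0\}$, which you use tacitly when you say $h$ has no constant term. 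The obstacle you anticipate also disappears in your own framework. Averaging $\abs{f}^2=\abs{g}^2+\abs{h}^2+2\operatorname{Re}(gh)$ over each circle $\{e^{i\theta}z\}$ kills the cross term pointwise. Tonelli, with the rotation invariance of $e^{-2\phi}\dV$, then gives $\norm{g}_\phi^2+\norm{h}_\phi^2=\norm{f}_\phi^2$ with no a priori membership needed.

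For comparable weights, your argument is shorter than the paper's. You compare $\PH^2_{\phi,L^2}$ and $\PH^2_{\psi,L^2}$ through the identity map and absorb $e^{2\psi-2\phi}\in[m,M]$ into $\tilde\mu$. The paper instead proves that $S_\phi\colon\PH^2_{\phi,\oplus}\to\PH^2_{\phi,L^2}$ is an isomorphism. It does this by correcting constants with the $\phi$-orthogonal projection onto $\mathcal C_\phi$ and then invoking the open mapping theorem, after which it applies its transfer proposition. Your route gives exactly the stated criterion. The paper's route additionally identifies $\PH^2_{\phi,\oplus}$ with $\PH^2_{\phi,L^2}$. It also yields $\norm{T^{\mathrm{ph},L^2}_{\mu,\phi}}_{\Sp_p}^p\leq2^{\max\{1,p\}}\norm{S_\phi^{-1}}^{2p}\norm{T_{\mu,\phi}}_{\Sp_p}^p$, a comparison with the holomorphic operator of $\phi$ itself.
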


The direct sum assertion follows from the holomorphic theorem of Isralowitz, Virtanen, and Wolf \cite[Theorems~2.7 and~3.2]{IsralowitzVirtanenWolf2015} and the positive block argument below; see Theorem~\ref{thm:generalized-direct-sum}. The assertions for the inherited norm are proved in Proposition~\ref{prop:circle-invariant-weight} and Corollary~\ref{cor:comparable-circle-weight}. Proposition~\ref{prop:generalized-trace} gives a trace formula for the canonical direct sum.

In several variables, invariance under the scalar circle action is weaker than radiality. For example, $\psi(z)=z^*Az$ has this invariance for every positive definite Hermitian matrix $A$, and is nonradial unless $A$ is a scalar matrix. The comparison hypothesis also allows bounded nonradial perturbations, provided the perturbed weight still satisfies the Levi bounds.

\begin{proposition}[Failure for the inherited norm]
\label{prop:inherited-failure-intro}
There are an entire function $q$ on $\C$ and a finite positive discrete measure $\mu$ such that the weight
\[
 \phi(z)=\frac{\abs{z}^2}{2}+\operatorname{Re}q(z)
\]
satisfies
\[
 dd^c\phi=\frac12dd^c\abs{z}^2,
 \qquad
 \mu(B(\cdot,r))\in L^p(\C,dV)
 \quad (0<p<\infty,\ r>0),
\]
and $T_{\mu,\phi}\in\Sp_p$ for every $0<p<\infty$, while the form $q_{\mu,\phi}$ is unbounded on $\PH_{\phi,L^2}^2$.
\end{proposition}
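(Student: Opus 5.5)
We plan to realize the failure through the antiholomorphic part of $\PH_{\phi,L^2}^2$. Since $\operatorname{Re}q$ is pluriharmonic, $dd^c\phi=\frac12dd^c\abs{z}^2$ holds automatically, and $e^{-2\phi}=e^{-\abs{z}^2}\abs{e^{-q}}^2$. Multiplication by $e^{-q}$ is therefore an isometry from $\F_\phi^2$ onto the standard Fock space $\F^2_{1}$ (here $\alpha=1$ up to normalization). Consequently the holomorphic side behaves like the Gaussian case: $T_{\mu,\phi}$ is unitarily equivalent to the Gaussian Toeplitz operator with the same $\mu$, because $\abs{f}^2e^{-2\phi}=\abs{fe^{-q}}^2e^{-\abs{z}^2}$. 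By Theorem~\ref{thm:main} (or Theorem~\ref{thm:generalized-intro}), $T_{\mu,\phi}\in\Sp_p$ for all $p$ as soon as $\mu(B(\cdot,r))\in L^p$ for all $p$. For a finite discrete measure $\mu=\sum_k c_k\delta_{a_k}$ with $\sum c_k<\infty$ and points $a_k$ separated (say $\abs{a_k}\to\infty$, pairwise distance $\geq1$), we have $\mu(B(z,r))\leq C\sum_k c_k\mathbf 1_{B(a_k,r)}(z)$ up to finite overlap. Its $L^p$ norm is controlled by $\sum c_k^p$ when $p\le1$, and by $\sup c_k$ together with finiteness when $p\geq1$. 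So we will choose $c_k$ decaying fast, e.g.\ $c_k=e^{-k}$, which gives all $p$.

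The unboundedness comes from antiholomorphic functions $\overline h$, whose weighted norm is $\int\abs{h}^2e^{-\abs{z}^2}\abs{e^{-q}}^2dV$. This is a Fock norm with the \emph{wrong} twist: $\overline h\,\overline{e^{q}}$ is not holomorphic, so such functions are not controlled by the Fock structure. We take $q(z)=\beta z^2$ with $0<\beta<1/2$. Then $e^{-2\phi}=e^{-\abs{z}^2-2\beta\operatorname{Re}z^2}$, which decays like $e^{-(1+2\beta)x^2-(1-2\beta)y^2}$. The inherited space contains $\overline{h}$ whenever $\abs{h}^2$ is integrable against this weight, for instance $h(z)=e^{\gamma z^2}$ with suitable real $\gamma$. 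Its weighted modulus squared at the sample points is $\abs{e^{\gamma a^2}}^2e^{-2\phi(a)}$, while $\abs{f(a)}^2e^{-2\phi(a)}$ for $f\in\F^2_\phi$ is bounded by $\norm{f}^2$ (by the isometry and the Gaussian pointwise bound). The plan is to find a one-parameter family $\overline{h_t}$ of unit vectors in $\PH_{\phi,L^2}^2$ and points $a_k$ such that $\abs{h_{t_k}(a_k)}^2e^{-2\phi(a_k)}\geq e^{2k}\cdot k$. Then $q_{\mu,\phi}(\overline{h_{t_k}},\overline{h_{t_k}})\geq c_k\cdot e^{2k}k\to\infty$ while $\norm{\overline{h_{t_k}}}=1$, which is unboundedness. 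Concretely, we would try the normalized reproducing-type functions $h_a(z)=e^{\lambda\overline{a}'z}$ adapted to the anisotropic Gaussian, or Gaussian-shifted exponentials $e^{bz}$ with complex $b$. We compute $\int\abs{e^{bz}}^2e^{-2\phi}dV$ exactly as an anisotropic Gaussian integral, and compare it with $\abs{e^{ba}}^2e^{-2\phi(a)}$, optimized over $b$. Because the weight is anisotropic but the function $e^{bz}$ only has holomorphic freedom, the optimal peak ratio along, say, the imaginary axis grows like $e^{\kappa\abs{a}^2}$ with $\kappa>0$ depending on $\beta$. This Gaussian optimization is the computation to verify. Choosing $a_k=i s_k$ with $\kappa s_k^2\geq 3k$ then gives the required growth against $c_k=e^{-k}$.

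The main obstacle is this last step: verifying that the antiholomorphic evaluation functionals at $a$, measured in $L^2(e^{-2\phi})$, have norm squared exceeding $e^{2\phi(a)}$ by an exponentially large factor in $\abs{a}^2$. Equivalently, the Bergman-type kernel of $\overline{\F}$ in this weight is much larger on the diagonal than $e^{2\phi}$, unlike the holomorphic kernel. We would first do the explicit Gaussian integral for $e^{bz}$. If the anisotropy with $\beta<1/2$ does not give exponential gain, we would fall back to a faster-growing entire $q$ (e.g.\ a lacunary series) and use the same peak-function mechanism. Throughout, the finiteness of $\mu$ and the local mass bounds are independent of $q$, so only the peak estimate needs care.
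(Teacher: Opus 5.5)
Your treatment of the Levi form, of the local masses of a separated atomic measure, and of the holomorphic side via the unitary $g\mapsto e^{-q}g$ onto the Gaussian Fock space is correct and matches the paper. The gap is the unboundedness mechanism, and no better computation can repair it: purely antiholomorphic test functions never give unboundedness, for any entire $q$. Since $\abs{\overline h}=\abs{h}$, the antiholomorphic functions in $L^2(e^{-2\phi}dV)$ are exactly the $\overline h$ with $h\in\F^2_\phi$, and $\norm{\overline h}_\phi=\norm{h}_\phi$. This subspace is therefore an isometric conjugate copy of $\F^2_\phi$. Hence
\[
 q_{\mu,\phi}(\overline h,\overline h)=\int_{\C}\abs{h}^2e^{-2\phi}\,d\mu\leq\norm{T_{\mu,\phi}}\,\norm{\overline h}_\phi^2 .
\]
Moreover, the antiholomorphic evaluation functional at $a$ has norm squared exactly $K_\phi(a,a)$, where $K_\phi(z,w)=\pi^{-1}e^{q(z)+\overline{q(w)}+z\overline w}$. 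Consequently
\[
 \frac{\abs{h(a)}^2e^{-2\phi(a)}}{\norm{h}_\phi^2}\leq K_\phi(a,a)e^{-2\phi(a)}=\frac1\pi
 \qquad(a\in\C).
\]
So your optimization over $e^{bz}$ necessarily gives $\kappa=0$, and falling back to a lacunary $q$ changes nothing. The heuristic that $\overline h\,\overline{e^{q}}$ is not holomorphic is misleading: the relevant function is $he^{-q}$, which is holomorphic and lies in the Gaussian Fock space.

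The missing idea is that unboundedness can only come from the failure of orthogonality between the holomorphic and antiholomorphic parts in the inherited norm. You need $f=g+\overline g$ with $\norm{f}_\phi\ll\norm{g}_\phi$ while $e^{-\phi}\abs{f}$ stays of size one at a chosen point. With $g_a=e^{q}k_a$ the paper computes
\[
 \norm{g_a+\overline{g_a}}_\phi^2=4\int\cos^2(\operatorname{Im}q(z)+ay)\,d\gamma_a(z),
 \qquad
 e^{-\phi(a)}\abs{f_a(a)}=\tfrac{2}{\sqrt\pi}\abs{\cos\operatorname{Im}q(a)}.
\]
So $q$ must keep the phase $\operatorname{Im}q+a_jy$ close to $\pi/2$ on all but $\varepsilon_j^2$ of the Gaussian mass near $a_j$, while keeping $\operatorname{Im}q(a_j)$ close to $0$. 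The paper achieves this by Mergelyan approximation on disjoint disks (Lemma~\ref{lem:phase-patching}).

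Your quadratic choice cannot do this, and in fact it fails for every mechanism. For $q=\beta z^2$ with $0<\beta<1/2$, the $e^{-2\phi}$-orthogonal polynomials (rescaled Hermite polynomials) also diagonalize the bilinear cross term:
\[
 \int P_jP_k\,e^{-2\phi}\,dV=(-2\beta)^k\norm{P_k}_\phi^2\delta_{jk},
\]
for instance $\inner{z}{\overline z}_\phi=-2\beta\norm{z}_\phi^2$. Hence $\norm{g+\overline h}_\phi^2\geq(1-2\beta)(\norm{g}_\phi^2+\norm{h}_\phi^2)$ whenever $h\perp_\phi1$. Together with a dilation argument for surjectivity, the synthesis map is an isomorphism. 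By Proposition~\ref{prop:synthesis-transfer}, the local mass criterion then holds for this weight, so no measure can make the inherited form unbounded.
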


This failure occurs for a weight whose Levi form is exactly Gaussian and whose holomorphic Toeplitz problem is unitarily equivalent to the Gaussian one for the same measure. Indeed, multiplication by $e^{-q}$ maps the holomorphic Fock space unitarily onto the Gaussian Fock space and intertwines the corresponding Toeplitz forms. Nevertheless, the inherited pluriharmonic form is unbounded, although the holomorphic Toeplitz operator belongs to every Schatten class. Thus the obstruction is genuinely pluriharmonic and is invisible both to the Levi form and to the holomorphic Toeplitz theory; it persists even when both the curvature and the holomorphic operator theory retain their Gaussian behavior.

The construction we perform exploits the failure of orthogonality between the holomorphic and antiholomorphic components in the inherited norm: we choose $q$ so that certain functions $g+\overline g$ have arbitrarily small inherited norm while their weighted values remain large at selected points. Placing suitably small atoms at these points produces a finite measure whose local masses belong to every $L^p$, yet whose Toeplitz form is unbounded on the inherited pluriharmonic space. The resulting failure is therefore more severe than a loss of the Gaussian orthogonal splitting or a deterioration of the comparison constants: no change of constants can restore the criterion, since the local mass condition no longer even guarantees boundedness.

Let $\F^2_{\alpha,0}=\{h\in\F^2_\alpha:h(0)=0\}$, and let $\overline{\F^2_{\alpha,0}} =\{\overline h:h\in\F^2_{\alpha,0}\}$. Relative to the orthogonal splitting
\[
 \PH^2_\alpha=\F^2_\alpha\oplus
 \overline{\F^2_{\alpha,0}}
\]
the operator $T_\mu^{\mathrm{ph}}$ has a positive $2\times2$ block form. Its first diagonal block is $T_\mu$, and its second is an antiunitary copy of a compression of $T_\mu$. Positivity shows that boundedness of the two diagonal blocks implies boundedness of the full form. If $B=T_\mu^{\mathrm{ph}}$ and $P_1,P_2$ are the coordinate projections, then
\[
 B=B^{1/2}P_1B^{1/2}+B^{1/2}P_2B^{1/2}.
\]
Each summand has the same nonzero eigenvalues as the corresponding diagonal block. This gives the Schatten estimates and the estimates for symmetric norms. Proposition~\ref{prop:positive-block} states the operator-theoretic step in an abstract form. Since the mixed blocks need not be calculated, the argument does not require radiality.

Positivity is essential here. For a complex measure, the Toeplitz operator need not be positive, the mixed form is no longer controlled by its diagonal restrictions, and our comparison does not apply.

Jaguzovi\'c and Vujadinovi\'c proved the trace class characterization \cite[Proposition~3.3]{JaguzovicVujadinovic2026}. For $p>1$, they obtained the necessary local mass condition by passing to the holomorphic diagonal block \cite[Proposition~3.5]{JaguzovicVujadinovic2026}. Together these results give necessity for every $p\geq1$. They also proved sufficiency for positive regular radial measures. In their decomposition, the mixed block is the remaining term \cite[Section~3.3]{JaguzovicVujadinovic2026}. For $n\geq2$, Theorem~\ref{thm:main} proves the conjecture posed in \cite[Introduction and Section~3.3]{JaguzovicVujadinovic2026}. When $n=1$, harmonic and pluriharmonic functions coincide; a corresponding criterion for positive measures and $p\geq1$ on the harmonic Fock space is stated in \cite[Theorem~4.2]{GouHuHuang2024}. Theorem~\ref{thm:main} proves this criterion in the present normalization and also covers $0<p<1$.

For pluriharmonic Bergman spaces, Choi studied positive Toeplitz operators \cite{Choi2007}; Schatten and Schatten--Herz criteria were obtained by Choi and Na \cite{ChoiNa2007} and by Na \cite{Na2009}. Fulsche developed the orthogonal direct sum and block matrix approach for Toeplitz operators on pluriharmonic Bergman and Fock spaces \cite[Sections~2 and~3.1, equation~(7)]{Fulsche2019}. Engli\v{s} studied the Berezin transform on the harmonic, rather than pluriharmonic, Fock space over $\C^n$ \cite{Englis2010}. In the plane, Vujadinovi\'c studied atomic decompositions and Carleson measures \cite{Vujadinovic2020,Vujadinovic2021}.

Section~\ref{sec:splitting} proves the orthogonal splitting and identifies the reproducing kernel. Section~\ref{sec:forms} relates bounded forms to the usual kernel operators and establishes the holomorphic Schatten criterion in the present normalization. Section~\ref{sec:positive-block} proves the principle for positive blocks used in the comparison. Section~\ref{sec:application} applies it to the pluriharmonic splitting and proves Theorem~\ref{thm:main} and Corollary~\ref{cor:symmetric-ideals}. Section~\ref{sec:sharpness} proves optimality and gives the exact trace identity. Section~\ref{sec:generalized-weights} treats generalized weights and the inherited norm.

\section{The orthogonal splitting}\label{sec:splitting}
With the convention that the inner product is linear in the first variable,
\[
 \inner{f}{g}_{2,\alpha}
 =\frac{1}{(\pi\alpha)^n}
 \int_{\C^n}f(z)\overline{g(z)}e^{-\abs{z}^2/\alpha}\dV(z).
\]
We use the convention $\mathbb N=\{0,1,2,\ldots\}$.

For a multi-index $m=(m_1,\ldots,m_n)\in\mathbb N^n$, we put $m!=m_1!\cdots m_n!$, $\abs{m}=m_1+\cdots+m_n$, and $z^m=z_1^{m_1}\cdots z_n^{m_n}$. The standard Gaussian calculation and the monomial expansion of entire functions show that
\[
 \inner{z^m}{z^\ell}_{2,\alpha}
 =\delta_{m\ell}\alpha^{\abs{m}}m!.
\]
Thus the functions
\[
 e_m(z)=\frac{z^m}{\alpha^{\abs{m}/2}\sqrt{m!}}
\]
form an orthonormal basis of $\F^2_\alpha$. Hence the reproducing kernel at $z$ is
\begin{equation}
 K_z(w)=\sum_{m\in\mathbb N^n}e_m(w)\overline{e_m(z)}
 =\exp\left(\frac{w\cdot\overline z}{\alpha}\right);
 \label{eq:holomorphic-kernel}
\end{equation}
in particular, $f(z)=\inner{f}{K_z}_{2,\alpha}$. These are the standard Fock space normalization and kernel formulas; see \cite{Zhu2012}.

\begin{lemma}[Orthogonal splitting]\label{lem:splitting}
Every $f\in\PH^2_\alpha$ has a unique representation
\[
 f=g+\overline h,
 \qquad g\in\F^2_\alpha,
 \qquad h\in\F^2_{\alpha,0}.
\]
The norms satisfy
\begin{equation}\label{eq:norm-splitting}
 \norm{f}_{2,\alpha}^2
 =\norm{g}_{2,\alpha}^2+\norm{h}_{2,\alpha}^2.
\end{equation}
As a result,
\begin{equation}\label{eq:orthogonal-splitting}
 \PH^2_\alpha
 =\F^2_\alpha\oplus\overline{\F^2_{\alpha,0}}.
\end{equation}
\end{lemma}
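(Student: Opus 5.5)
The plan has three steps: obtain the decomposition $f=g+\overline h$ at the level of functions, show the two pieces are orthogonal via a double power series expansion, and then deduce that $\PH^2_\alpha$ is closed.

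\textbf{Step 1: the decomposition.} A pluriharmonic function on the simply connected domain $\C^n$ is the real part of a holomorphic function when real-valued. For complex-valued $f$, apply this to $\operatorname{Re}f$ and $\operatorname{Im}f$ separately. Equivalently, $\partial f$ is a $\bar\partial$-closed $(1,0)$-form with holomorphic coefficients, hence $\partial f=\partial g$ for some entire $g$, and then $f-g$ is antiholomorphic. Either way, $f=g+\overline h$ with $g,h$ entire. Normalizing $h(0)=0$ by moving the constant into $g$ makes the pair unique, since an entire function that is also antiholomorphic is constant. It follows that $f$ has a locally uniformly convergent double expansion
\[
 f(z)=\sum_{m} a_m z^m+\sum_{|\ell|\ge1}\overline{b_\ell}\,\overline z^{\ell}.
\]

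\textbf{Step 2: the norm identity.} This is the main obstacle: we do not yet know that $g$ and $h$ separately lie in $L^2(d\mu_\alpha)$, so we cannot simply expand $\norm{g+\overline h}^2$. I would work on balls $|z|<R$ in polar coordinates in each variable, i.e.\ integrate over polydiscs, or better, over sets invariant under the torus action $z\mapsto(e^{i\theta_1}z_1,\dots,e^{i\theta_n}z_n)$. For such a set $E$ the Gaussian weight is torus-invariant, so
\[
 \int_E z^m\overline{z^{k}}\,d\mu_\alpha=0\quad (m\ne k),
 \qquad
 \int_E z^m z^{\ell}\,d\mu_\alpha=0\quad (|m|+|\ell|\ge1).
\]
The second identity uses the scalar circle action alone, because $z^mz^\ell$ has homogeneity degree $|m|+|\ell|$. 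On a bounded polydisc $E$ the series converge uniformly, so termwise integration is legitimate and gives
\[
 \int_E|f|^2\,d\mu_\alpha=\sum_m|a_m|^2\!\int_E|z^m|^2\,d\mu_\alpha+\sum_{|\ell|\ge1}|b_\ell|^2\!\int_E|z^\ell|^2\,d\mu_\alpha .
\]
All cross terms vanish: holomorphic times antiholomorphic gives $z^m z^\ell$ with $|m|+|\ell|\ge1$, because $|\ell|\ge1$. Letting $E$ increase to $\C^n$ and using monotone convergence on each nonnegative sum yields
\[
 \norm{f}^2=\sum_m|a_m|^2\alpha^{|m|}m!+\sum_{|\ell|\ge1}|b_\ell|^2\alpha^{|\ell|}\ell! .
\]
Finiteness of the left side then shows $g\in\F^2_\alpha$ and $h\in\F^2_{\alpha,0}$, and \eqref{eq:norm-splitting} holds.

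\textbf{Step 3: closedness and the direct sum.} Conversely, every $g+\overline h$ with $g\in\F^2_\alpha$ and $h\in\F^2_{\alpha,0}$ is pluriharmonic and lies in $L^2$. Hence $\PH^2_\alpha$ is the image of the Hilbert space $\F^2_\alpha\oplus\F^2_{\alpha,0}$ under the map $(g,h)\mapsto g+\overline h$. This map is isometric by \eqref{eq:norm-splitting}, which is a statement about complex-linear combinations. The space $\overline{\F^2_{\alpha,0}}$ is a closed linear subspace, being the isometric antilinear image of a closed space. Therefore $\PH^2_\alpha$ is closed, and polarizing the norm identity gives $\F^2_\alpha\perp\overline{\F^2_{\alpha,0}}$, which is \eqref{eq:orthogonal-splitting}.
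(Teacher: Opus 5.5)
Your proposal is correct and follows essentially the paper's strategy. On bounded rotation-invariant sets the cross terms $gh+\overline{gh}$ integrate to zero, and monotone convergence then gives $\norm{f}_{2,\alpha}^2=\norm{g}_{2,\alpha}^2+\norm{h}_{2,\alpha}^2$. The paper differs only in two places: it kills the cross terms with the mean value property on spheres of the entire function $gh$, which vanishes at $0$, rather than by termwise integration of power series; and it checks $\inner{g}{\overline h}_{2,\alpha}=0$ directly instead of by polarization.

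Your torus-invariance diagonalization of the $\abs{g}^2$ and $\abs{h}^2$ terms is not needed, since $\int_E\abs{g}^2\,d\mu_\alpha$ and $\int_E\abs{h}^2\,d\mu_\alpha$ can be left unexpanded. Dropping it leaves an argument that uses only the scalar circle action, which is exactly what the paper reuses for weights such as $z^*Az$ in Proposition~\ref{prop:circle-invariant-weight}.
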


\begin{proof}
For $1\leq j,k\leq n$, pluriharmonicity gives
\[
 \frac{\partial}{\partial\overline z_k}
 \left(\frac{\partial f}{\partial z_j}\right)=0.
\]
Thus each $\partial f/\partial z_j$ is holomorphic. The holomorphic differential form
\[
 \omega=\sum_{j=1}^n\frac{\partial f}{\partial z_j}\,dz_j
\]
is closed because
\[
 \frac{\partial}{\partial z_k}
 \left(\frac{\partial f}{\partial z_j}\right)
 =\frac{\partial}{\partial z_j}
 \left(\frac{\partial f}{\partial z_k}\right).
\]
Since $\C^n$ is simply connected, there is an entire function $g$ with $dg=\omega$; hence
\[
 \frac{\partial(f-g)}{\partial z_j}=0,
 \qquad 1\leq j\leq n
\]
and therefore $f-g=\overline h$ for some entire function $h$. Replacing $g$ by $g+\overline{h(0)}$ and $h$ by $h-h(0)$, we may assume that $h(0)=0$. If
\[
 g_1+\overline{h_1}=g_2+\overline{h_2},
 \qquad h_1(0)=h_2(0)=0,
\]
then $g_1-g_2=\overline{h_2-h_1}$ is both holomorphic and antiholomorphic, so it is constant. Evaluating the right-hand side at the origin shows that this constant is zero. Hence $g_1=g_2$ and $h_1=h_2$.

It remains to prove that $g$ and $h$ have finite Fock norm. Let $S^{2n-1}$ be the unit sphere in $\C^n$, with normalized surface measure $d\sigma$. Since $gh$ is entire and $(gh)(0)=0$, the mean value property gives
\[
 \int_{S^{2n-1}}g(\rho\zeta)h(\rho\zeta)\,d\sigma(\zeta)=0,
 \qquad \rho\geq0.
\]
Using polar coordinates, for every $R>0$ we obtain
\begin{align*}
 &\int_{B(0,R)}g(z)h(z)e^{-\abs{z}^2/\alpha}\dV(z)\\
 &\quad=\abs{S^{2n-1}}
 \int_0^R e^{-\rho^2/\alpha}\rho^{2n-1}
 \left(\int_{S^{2n-1}}g(\rho\zeta)h(\rho\zeta)
 \,d\sigma(\zeta)\right)d\rho\\
 &\quad=0.
\end{align*}
Since
\begin{align*}
 \abs{f}^2
 &=\bigl(g+\overline h\bigr)\bigl(\overline g+h\bigr)\\
 &=\abs{g}^2+\abs{h}^2+gh+\overline{gh},
\end{align*}
the preceding identity and its complex conjugate give
\begin{align*}
 &\frac{1}{(\pi\alpha)^n}
 \int_{B(0,R)}\abs{f(z)}^2e^{-\abs{z}^2/\alpha}\dV(z)\\
 &\quad=\frac{1}{(\pi\alpha)^n}
 \int_{B(0,R)}
 \left(\abs{g(z)}^2+\abs{h(z)}^2\right)
 e^{-\abs{z}^2/\alpha}\dV(z).
\end{align*}
The left-hand side is bounded above by $\norm{f}_{2,\alpha}^2$. Letting $R\to\infty$ and applying monotone convergence to the right-hand side,
\[
 \norm{g}_{2,\alpha}^2+\norm{h}_{2,\alpha}^2
 =\norm{f}_{2,\alpha}^2<\infty;
\]
that is, $g,h\in\F^2_\alpha$. Finally, for $g\in\F^2_\alpha$ and $h\in\F^2_{\alpha,0}$, the same calculation with $R\to\infty$ gives
\[
 \inner{g}{\overline h}_{2,\alpha}
 =\frac{1}{(\pi\alpha)^n}
 \int_{\C^n}g(z)h(z)e^{-\abs{z}^2/\alpha}\dV(z)=0,
\]
which proves the orthogonal decomposition.
\end{proof}

Evaluation at the origin is continuous on $\F^2_\alpha$, since
\[
 \abs{h(0)}=\abs{\inner{h}{K_0}_{2,\alpha}}
 \leq\norm{h}_{2,\alpha}\norm{K_0}_{2,\alpha}
 =\norm{h}_{2,\alpha};
\]
thus $\F^2_{\alpha,0}$ is closed. Conjugation is an antiunitary map, so $\overline{\F^2_{\alpha,0}}$ is also closed. Since both summands in \eqref{eq:orthogonal-splitting} are closed, the splitting lemma also shows that $\PH^2_\alpha$ is closed in $L^2(\C^n,d\mu_\alpha)$.

The reproducing kernel of $\F^2_{\alpha,0}$ is $K_z-1$. Hence the kernel at $z$ for $\overline{\F^2_{\alpha,0}}$ is $\overline{K_z-1}$. Adding the kernels of the two orthogonal summands gives
\begin{equation}\label{eq:ph-kernel}
 K_z^{\mathrm{ph}}(w)
 =K_z(w)+\overline{K_z(w)}-1
 =e^{w\cdot\overline z/\alpha}
  +e^{\overline w\cdot z/\alpha}-1.
\end{equation}
Indeed, if $f=g+\overline h$, where $h(0)=0$, then
\begin{align*}
 \inner{f}{K_z^{\mathrm{ph}}}_{2,\alpha}
 &=\inner{g}{K_z}_{2,\alpha}
   +\inner{\overline h}{\overline{K_z-1}}_{2,\alpha}\\
 &=g(z)+\overline{\inner{h}{K_z-1}_{2,\alpha}}\\
 &=g(z)+\overline{h(z)-h(0)}\\
 &=f(z).
\end{align*}
The splitting and the associated block form also appear in \cite[Sections~2 and~3.1, equation~(7)]{Fulsche2019}.

\section{Toeplitz forms and kernel operators}\label{sec:forms}
The form in \eqref{eq:toeplitz-form-intro} is positive; if it is bounded on $H$, the Riesz representation theorem gives a unique bounded positive operator $S$ on $H$ such that
\begin{equation}\label{eq:representing-operator}
 \inner{Sf}{g}_H
 =\int_{\C^n}f(z)\overline{g(z)}
 e^{-\abs{z}^2/\alpha}\,d\mu(z).
\end{equation}
This convention is a special case of the sesquilinear form approach to singular Toeplitz symbols developed in \cite[Sections~2 and~4]{RozenblumVasilevski2014}.

We compare this convention with the usual kernel definition. Let $H$ be either $\F^2_\alpha$ or $\PH^2_\alpha$, write $K_z^H$ for its reproducing kernel and let
\[
 \mathcal D_H=\operatorname{span}\{K_z^H:z\in\C^n\}.
\]
This space is dense in $H$, since a vector orthogonal to every $K_z^H$ vanishes identically.

\begin{lemma}[Agreement with the kernel definition]
\label{lem:kernel-form-agreement}
Suppose that, for every $f\in\mathcal D_H$, the formula
\begin{equation}\label{eq:kernel-operator}
 (T_{\mu,0}^Hf)(z)
 =\int_{\C^n}f(w)\overline{K_z^H(w)}
 e^{-\abs{w}^2/\alpha}\,d\mu(w)
\end{equation}
converges and defines a function in $H$. If $T_{\mu,0}^H$ has a bounded extension $T$, then the form in \eqref{eq:toeplitz-form-intro} is bounded on $H$ and is represented by $T$. Conversely, if the form is bounded on $H$, then its representing operator agrees pointwise with \eqref{eq:kernel-operator}.
\end{lemma}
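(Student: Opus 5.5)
The proof rests on the reproducing property $u(z)=\inner{u}{K_z^H}_H$ in $H$, together with a Fatou-type approximation from the dense subspace $\mathcal D_H$. Throughout, write $q(f,g)$ for the form in \eqref{eq:toeplitz-form-intro}.

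\emph{Converse direction.} Suppose the form is bounded and let $S$ be its representing operator from \eqref{eq:representing-operator}. For $f\in H$ and $z\in\C^n$ the reproducing property gives
\[
 (Sf)(z)=\inner{Sf}{K_z^H}_H=q(f,K_z^H)=\int_{\C^n}f(w)\overline{K_z^H(w)}e^{-\abs{w}^2/\alpha}\,d\mu(w).
\]
The integral converges absolutely because the form is bounded. This is exactly \eqref{eq:kernel-operator}, so $S$ agrees pointwise with the kernel formula, in particular on $\mathcal D_H$.

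\emph{Direct direction: identity on $\mathcal D_H$.} Assume $T_{\mu,0}^H$ is defined on $\mathcal D_H$ and has a bounded extension $T$. For $f\in\mathcal D_H$, the reproducing property and \eqref{eq:kernel-operator} give
\[
 \inner{Tf}{K_z^H}_H=(T_{\mu,0}^Hf)(z)=q(f,K_z^H).
\]
Both sides are conjugate-linear in the second slot, so $\inner{Tf}{g}_H=q(f,g)$ for all $f,g\in\mathcal D_H$. Taking $g=f$ yields
\[
 \int_{\C^n}\abs{f}^2e^{-\abs{w}^2/\alpha}\,d\mu\le\norm{T}\norm{f}_H^2,\qquad f\in\mathcal D_H.
\]

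\emph{Direct direction: extension to all of $H$.} Given $f\in H$, choose $f_k\in\mathcal D_H$ with $f_k\to f$ in $H$. Point evaluations are bounded by $\norm{K_z^H}_H$, so $f_k\to f$ pointwise on $\C^n$. Fatou's lemma then gives
\[
 \int_{\C^n}\abs{f}^2e^{-\abs{w}^2/\alpha}\,d\mu\le\liminf_{k}\norm{T}\norm{f_k}_H^2=\norm{T}\norm{f}_H^2 .
\]
Hence $H\subset L^2(e^{-\abs{w}^2/\alpha}d\mu)$ continuously. By Cauchy--Schwarz, $q(f,g)$ converges absolutely and satisfies $\abs{q(f,g)}\le\norm{T}\norm{f}_H\norm{g}_H$, so the form is bounded.

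To show $T$ represents the form, apply Fatou again to $f_k-f_j$ with $j\to\infty$:
\[
 \int_{\C^n}\abs{f_k-f}^2e^{-\abs{w}^2/\alpha}\,d\mu\le\norm{T}\liminf_j\norm{f_k-f_j}_H^2 .
\]
The right-hand side tends to $0$ as $k\to\infty$, since $(f_k)$ is Cauchy in $H$. So $f_k\to f$ in $L^2(e^{-\abs{w}^2/\alpha}d\mu)$, and the same holds for an approximating sequence $g_k\to g$. Therefore $q(f_k,g_k)\to q(f,g)$, while $\inner{Tf_k}{g_k}_H\to\inner{Tf}{g}_H$ by boundedness of $T$. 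This gives $\inner{Tf}{g}_H=q(f,g)$ on all of $H$, so $T$ is the representing operator.

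The main obstacle is this extension step. A priori the integrand need not be integrable for general $f,g\in H$, so continuity of $q$ cannot be assumed. The double use of Fatou's lemma, which relies on pointwise convergence from the reproducing property, is what supplies both the absolute convergence and the convergence $q(f_k,g_k)\to q(f,g)$.
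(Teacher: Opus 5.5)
Your proof is correct and follows the paper's argument essentially step for step. You obtain the identity on $\mathcal D_H$ from the reproducing property, apply Fatou's lemma along an approximating sequence to get $\int|f|^2e^{-\abs{w}^2/\alpha}\,d\mu\le\norm{T}\norm{f}_H^2$ on all of $H$, use Cauchy--Schwarz for boundedness, and read off the converse directly from the reproducing property. Your second application of Fatou is harmless but unnecessary: the bound you have just proved already applies to $f_k-f\in H$, since a bounded sesquilinear form is jointly continuous, and this is exactly how the paper passes to the limit.
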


\begin{proof}
The reproducing property and linearity give, for $f,g\in\mathcal D_H$,
\begin{equation}\label{eq:kernel-form-identity}
 \inner{T_{\mu,0}^Hf}{g}_H
 =\int_{\C^n}f(w)\overline{g(w)}
 e^{-\abs{w}^2/\alpha}\,d\mu(w).
\end{equation}
Suppose first that $T_{\mu,0}^H$ extends to a bounded operator $T$. If $f_j\in\mathcal D_H$ and $f_j\to f$ in $H$, then $f_j\to f$ pointwise. Fatou's lemma and \eqref{eq:kernel-form-identity} give
\begin{align*}
 \int_{\C^n}\abs{f(w)}^2e^{-\abs{w}^2/\alpha}\,d\mu(w)
 &\leq\liminf_{j\to\infty}\inner{Tf_j}{f_j}_H\\
 &\leq\norm{T}\norm{f}_H^2.
\end{align*}
Weighted $L^2(d\mu)$ Cauchy--Schwarz therefore bounds the full form by
\[
 \norm{T}\norm{f}_H\norm{g}_H.
\]
Passing to the limit in \eqref{eq:kernel-form-identity}, using this bound on the form and the continuity of $T$, shows that $T$ represents it.

Conversely, if the form is represented by $S$, then for every $f\in H$ and $z\in\C^n$,
\begin{align*}
 (Sf)(z)
 &=\inner{Sf}{K_z^H}_H\\
 &=\int_{\C^n}f(w)\overline{K_z^H(w)}
 e^{-\abs{w}^2/\alpha}\,d\mu(w).
\end{align*}
This is \eqref{eq:kernel-operator}.
\end{proof}

Thus our form convention agrees with the kernel convention in \cite[equation~(3.1)]{JaguzovicVujadinovic2026} whenever the latter has a bounded extension.

\begin{lemma}\label{lem:holomorphic-criterion}
Let $\mu$ be a positive Borel measure on $\C^n$, let $0<p<\infty$, and let $r>0$. The form in \eqref{eq:toeplitz-form-intro} is bounded on $\F^2_\alpha$ and $T_\mu\in\Sp_p$ if and only if $z\mapsto\mu(B(z,r))$ belongs to $L^p(\C^n,dV)$. This condition does not depend on $r$.
\end{lemma}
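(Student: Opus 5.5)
The plan is to reduce Lemma~\ref{lem:holomorphic-criterion} to the known holomorphic criterion of Isralowitz--Zhu (for $n=1$) and of Isralowitz, Virtanen and Wolf on $\C^n$ \cite[Theorems~2.7 and~3.2]{IsralowitzVirtanenWolf2015}, checking only that the normalizations agree. Write $\phi(z)=\abs{z}^2/(2\alpha)$. Then $\F^2_\alpha$ coincides, as a set, with the generalized Fock space $\F^2_\phi$ of entire functions in $L^2(e^{-2\phi}dV)$. The norms differ only by the constant factor $(\pi\alpha)^{-n/2}$, and
\[
 dd^c\phi=\frac{1}{2\alpha}\,dd^c\abs{z}^2,
\]
so the Levi bounds hold with $c=C=1/(2\alpha)$. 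The weight appearing in \eqref{eq:toeplitz-form-intro} is exactly $e^{-2\phi}\,d\mu$. Hence the form \eqref{eq:toeplitz-form-intro} is $(\pi\alpha)^n$ times the form whose representing operator on $\F^2_\phi$ is the Toeplitz operator treated in \cite{IsralowitzVirtanenWolf2015}. Rescaling by a positive constant changes Schatten norms only by that constant, so membership in $\Sp_p$ is unaffected.

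The one point needing care is matching conventions, since those papers define $T_\mu$ by the kernel integral. Lemma~\ref{lem:kernel-form-agreement} handles this in both directions. If the form is bounded, its representing operator agrees pointwise with the kernel formula \eqref{eq:kernel-operator}. Conversely, if the kernel operator is bounded (which the cited theorem gives when the local masses are in $L^p$, through the Carleson/Fock--Carleson condition they imply), then the form is bounded and is represented by it. Sufficiency and necessity in the cited theorems are both stated for the bounded (indeed compact) operator, so the equivalence transfers.

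The cited theorems phrase their criterion through averages such as $\mu(B(z,r))$ or the Berezin transform $\tilde\mu$. In that case I would invoke the standard equivalences. First, $\tilde\mu\in L^p$ is equivalent to $\mu(B(\cdot,r))\in L^p$ for $p\ge1$. Second, for all $p>0$, $\mu(B(\cdot,r))\in L^p$ is equivalent to $\{\mu(B(a,r))\}_{a\in r\mathbb Z^{2n}}\in\ell^p$. The $r$-independence follows by covering $B(z,R)$ by boundedly many balls $B(z+a_j,r)$ with fixed translates $a_j$. This gives
\[
 \mu(B(z,R))\le\sum_j\mu(B(z+a_j,r)),
\]
and translation invariance of $dV$ yields
\[
 \norm{\mu(B(\cdot,R))}_{L^p}\le C_{p,n,R/r}\,\norm{\mu(B(\cdot,r))}_{L^p}.
\]
For $p<1$ this uses the $p$-th power subadditivity instead of Minkowski's inequality. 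The reverse comparison is trivial for $R\ge r$.

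I expect the main obstacle to be purely bookkeeping for $0<p<1$. One must confirm that the cited statements cover the full range and use a local-mass or lattice formulation rather than the Berezin transform, since for $p<1$ the Berezin transform is not equivalent to the local mass condition. One must also confirm that their measure convention (whether or not $e^{-2\phi}$ is absorbed into $\mu$) matches ours. I would state explicitly that with $\phi=\abs{z}^2/(2\alpha)$ their operator is $(\pi\alpha)^{-n}T_\mu$ and that their condition reads $\mu(B(\cdot,r))\in L^p$, or an equivalent $\ell^p$ lattice sum, and then conclude.
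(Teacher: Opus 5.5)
Your reduction is the paper's route: you cite \cite[Theorems~2.7 and~3.2]{IsralowitzVirtanenWolf2015}, rescale so that their operator is $(\pi\alpha)^{-n}T_\mu$, and match conventions through Lemma~\ref{lem:kernel-form-agreement}. Your covering argument for $r$-independence is also correct. There is, however, a genuine gap: you never check the hypothesis that the cited theorems place on $\mu$. Isralowitz, Virtanen and Wolf assume their condition (2.1),
\[
 \int_{\C^n}e^{-\gamma\abs{z-w}}\,d\mu(w)<\infty
 \qquad (z\in\C^n,\ \gamma>0).
\]
This condition is what makes their kernel-defined Toeplitz operator meaningful on the span of kernels. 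The lemma concerns an arbitrary positive Borel measure, so (2.1) is not automatic. You must show that each side of the equivalence forces it before the theorems can be applied. The same growth control is needed for the first half of Lemma~\ref{lem:kernel-form-agreement}, which assumes that the kernel integrals converge for $f\in\mathcal D_H$. You gesture at one direction (local masses in $L^p$ give a Fock--Carleson bound), but you never derive (2.1) from it. In the necessity direction you do not address it at all. This verification, not the $p<1$ bookkeeping or the measure conventions, is the actual content the paper adds to the citation.

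The missing step is short, because both sides give uniformly bounded local masses.
\begin{itemize}
\item Suppose $\mu(B(\cdot,r))\in L^p$. Then $B(z,r/2)\subset B(w,r)$ for $w\in B(z,r/2)$, and averaging over $w$ gives $\sup_z\mu(B(z,r/2))\leq V(B(0,r/2))^{-1/p}\norm{\mu(B(\cdot,r))}_{L^p}$.
\item Suppose the form is bounded. Testing it on normalized kernels gives $e^{-r^2/\alpha}\mu(B(z,r))\leq\int_{\C^n}e^{-\abs{w-z}^2/\alpha}\,d\mu(w)=\inner{T_\mu k_z}{k_z}\leq\norm{T_\mu}$.
\end{itemize}
Next, partition $\C^n\simeq\mathbb R^{2n}$ into cubes of diameter less than $\rho$ and sum over shells around $z$. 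With uniformly bounded local masses this gives $\int_{\C^n}e^{-\gamma\abs{z-w}}\,d\mu(w)<\infty$ for every $z$ and $\gamma>0$, which is (2.1).

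Separately, your caution about the Berezin transform for $p<1$ does not apply in the Fock setting. The Berezin kernel here is Gaussian, so $\tilde\mu\in L^p$ and $\mu(B(\cdot,r))\in L^p$ are equivalent for every $p>0$. One direction is pointwise domination; the other is $p$-subadditivity over a lattice. The failure you have in mind is a Bergman-space phenomenon. Nothing in your argument depends on this point.
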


\begin{proof}
Let $c_\alpha=(\pi\alpha)^{-n}$ and let $\inner{\cdot}{\cdot}_0$ be the unnormalized inner product in \cite{IsralowitzVirtanenWolf2015}, with $2\phi(z)=\abs{z}^2/\alpha$. Reproduction and the kernel formula give
\[
 \inner{f}{g}_{2,\alpha}=c_\alpha\inner{f}{g}_0,
 \qquad K_z^0=c_\alpha K_z,
 \qquad T_\mu^0=c_\alpha T_\mu.
\]
Thus the normalization does not affect boundedness or Schatten membership. Moreover,
\[
 dd^c\phi=\frac{1}{2\alpha}dd^c\abs{z}^2,
\]
so the weight hypothesis in that paper holds by choosing $0<c<1/(2\alpha)<C$.

The desired equivalence and the independence of $r$ follow from \cite[Theorems~2.7 and~3.2]{IsralowitzVirtanenWolf2015} once we verify their auxiliary integrability hypothesis; see also \cite[Theorems~4.4 and~5.4]{IsralowitzZhu2010} in one dimension.

We use the elementary implication
\begin{equation}\label{eq:uniform-mass-implies-ivw}
 \sup_{a\in\C^n}\mu(B(a,\rho))<\infty
 \quad\to\quad
 \int_{\C^n}e^{-\gamma\abs{z-w}}\,d\mu(w)<\infty
\end{equation}
for every $\rho,\gamma>0$ and $z\in\C^n$. Indeed, partition $\C^n\simeq\mathbb R^{2n}$ into cubes of side length $\rho/\sqrt{2n}$. If $a_j$ is the center of $Q_j$, then $Q_j\subset B(a_j,\rho)$. If the supremum on the left is $M_\rho$, then a shell count gives
\[
 \int_{\C^n}e^{-\gamma\abs{z-w}}\,d\mu(w)
 \leq C_{n,\rho}M_\rho e^{\gamma\rho}
 \sum_{k=0}^\infty(k+1)^{2n}e^{-\gamma k}<\infty.
\]
This is \cite[(2.1)]{IsralowitzVirtanenWolf2015}.

Suppose first that $w\mapsto\mu(B(w,r))$ belongs to $L^p(\C^n,dV)$. For $w\in B(z,r/2)$, we have $B(z,r/2)\subset B(w,r)$; integrating the resulting inequality over $w\in B(z,r/2)$,
\[
 \sup_{z\in\C^n}\mu(B(z,r/2))
 \leq V(B(0,r/2))^{-1/p}
 \norm{\mu(B(\cdot,r))}_{L^p(\C^n,dV)}<\infty.
\]
Thus \eqref{eq:uniform-mass-implies-ivw} applies.

Conversely, suppose that $T_\mu\in\Sp_p$. The normalized kernel is
\[
 k_z(w)=\frac{K_z(w)}{\norm{K_z}_{2,\alpha}}
 =e^{w\cdot\overline z/\alpha-\abs{z}^2/(2\alpha)}.
\]
Completing the square and using $\norm{k_z}_{2,\alpha}=1$ gives
\[
 e^{-r^2/\alpha}\mu(B(z,r))
 \leq\int_{\C^n}e^{-\abs{w-z}^2/\alpha}\,d\mu(w)
 =\inner{T_\mu k_z}{k_z}\leq\norm{T_\mu}.
\]
Thus the local masses are uniformly bounded, and \eqref{eq:uniform-mass-implies-ivw} applies again.
\end{proof}

\section{A principle for positive blocks}\label{sec:positive-block}

We use the following standard estimates for positive block operators. In finite dimensions, a similar underlying decomposition is given in \cite[Lemma~1.1]{BourinLee2013}. Its consequence for symmetric norms appears in \cite[equation~(1.1)]{BourinLee2013}, while the corresponding concave trace inequality is proved in \cite[Corollary~2.7]{BourinLee2013}; the upper bound in that trace inequality follows from Rotfel'd's inequality \cite{Rotfeld1969}. We prove a Hilbert space version adapted to our setting, and boundedness of the form, compactness, and submajorization. For $0<p<1$, we instead use McCarthy's p-triangle inequality \eqref{eq:p-triangle}, which applies directly to compact operators.

\begin{proposition}[Positive block principle]
\label{prop:positive-block}
Let $m\geq1$, let
\[
 H=H_1\oplus\cdots\oplus H_m
\]
be a finite orthogonal sum of separable complex Hilbert spaces, and let $q$ be a positive sesquilinear form defined on all of $H$. Let $\iota_j:H_j\to H$ be the coordinate inclusion and $P_j=\iota_j\iota_j^*$ the corresponding orthogonal projection. Suppose that, for every $j$, the restriction of $q$ to $H_j$ is bounded and represented by a positive operator $A_j$:
\[
 q(\iota_jx,\iota_jy)=\inner{A_jx}{y}_{H_j},
 \qquad x,y\in H_j.
\]
Then $q$ is bounded on $H$. If $B$ is its representing operator, then
\begin{equation}\label{eq:block-boundedness}
 0\leq B\leq m(A_1\oplus\cdots\oplus A_m)
\end{equation}
and
\begin{equation}\label{eq:block-operator-norm}
\norm{B}\leq\sum_{j=1}^m\norm{A_j}.
\end{equation}
Moreover,
\begin{equation}\label{eq:block-compressions}
\iota_j^*B\iota_j=A_j,
\qquad 1\leq j\leq m.
\end{equation}

The positive operators
\begin{equation}\label{eq:block-factorization}
 B_j=B^{1/2}P_jB^{1/2},
 \qquad 1\leq j\leq m,
\end{equation}
satisfy
\[
 B=\sum_{j=1}^mB_j.
\]
The operator $B$ is compact if and only if every $A_j$ is compact. When these operators are compact,
\begin{equation}\label{eq:block-compression}
 s_k(A_j)\leq s_k(B),
 \qquad k\geq1,\quad 1\leq j\leq m,
\end{equation}
and, for every $N\geq1$,
\begin{equation}\label{eq:block-ky-fan}
 \sum_{k=1}^Ns_k(B)
 \leq\sum_{j=1}^m\sum_{k=1}^Ns_k(A_j).
\end{equation}

As a result, for every symmetrically normed ideal $(\mathcal I_\Phi,\norm{\cdot}_\Phi)$,
\[
 B\in\mathcal I_\Phi
 \quad\Longleftrightarrow\quad
 A_j\in\mathcal I_\Phi\quad\text{for every }j,
\]
and, whenever these conditions hold,
\begin{equation}\label{eq:block-symmetric-norm}
 \max_{1\leq j\leq m}\norm{A_j}_\Phi
 \leq\norm{B}_\Phi
 \leq\sum_{j=1}^m\norm{A_j}_\Phi.
\end{equation}
For the Schatten ideals this gives
\begin{align}
 \max_{1\leq j\leq m}\norm{A_j}_{\Sp_p}^p
 &\leq\norm{B}_{\Sp_p}^p
 \leq\sum_{j=1}^m\norm{A_j}_{\Sp_p}^p,
 &&0<p<1,\label{eq:block-schatten-small}\\
 \max_{1\leq j\leq m}\norm{A_j}_{\Sp_p}
 &\leq\norm{B}_{\Sp_p}
 \leq\sum_{j=1}^m\norm{A_j}_{\Sp_p},
 &&1\leq p<\infty.\label{eq:block-schatten}
\end{align}
Finally,
\begin{equation}\label{eq:block-trace}
 \operatorname{Tr}B=\sum_{j=1}^m\operatorname{Tr}A_j,
\end{equation}
where the traces are understood in the extended sense.
\end{proposition}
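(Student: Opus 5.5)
We prove the statements in the order listed, starting from Cauchy--Schwarz for the positive form $q$. Write $x=\sum_j\iota_jx_j$ with $x_j\in H_j$. Since $q$ is positive, $\abs{q(u,v)}^2\le q(u,u)q(v,v)$, so $\sqrt{q(\cdot,\cdot)}$ is a seminorm. Hence
\[
 q(x,x)\le\Bigl(\sum_j q(\iota_jx_j,\iota_jx_j)^{1/2}\Bigr)^2\le m\sum_j\inner{A_jx_j}{x_j}.
\]
This gives boundedness of the quadratic form, and by polarization and Cauchy--Schwarz boundedness of $q$ with a representing $B\ge0$. It also gives \eqref{eq:block-boundedness}. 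For \eqref{eq:block-operator-norm} we use the middle expression instead:
\[
 \Bigl(\sum_j\norm{A_j}^{1/2}\norm{x_j}\Bigr)^2\le\Bigl(\sum_j\norm{A_j}\Bigr)\norm{x}^2,
\]
again by Cauchy--Schwarz. The compression identity \eqref{eq:block-compressions} is immediate from the definitions. The identity $B=\sum_jB_j$ follows from $\sum_jP_j=I$.

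For compactness, if $B$ is compact then each $A_j=\iota_j^*B\iota_j$ is compact. Conversely, \eqref{eq:block-boundedness} gives $\norm{B^{1/2}x}^2\le m\sum_j\norm{A_j^{1/2}x_j}^2$. Thus a weakly null sequence is mapped by $B^{1/2}$ to a norm-null sequence when every $A_j^{1/2}$ is compact, so $B^{1/2}$, and hence $B$, is compact. For \eqref{eq:block-compression}, apply the minimax principle: $A_j$ is a compression of $B$ by the isometry $\iota_j$, so $s_k(A_j)=\lambda_k(\iota_j^*B\iota_j)\le\lambda_k(B)$.

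The key step is \eqref{eq:block-ky-fan}. Write $B_j=(P_jB^{1/2})^*(P_jB^{1/2})$. Then $B_j$ has the same nonzero eigenvalues, with multiplicity, as
\[
 P_jB^{1/2}(P_jB^{1/2})^*=P_jBP_j,
\]
and the latter is unitarily equivalent to $A_j\oplus0$. Hence $s_k(B_j)=s_k(A_j)$. Ky Fan's inequality for compact operators, $\sum_{k\le N}s_k(X+Y)\le\sum_{k\le N}s_k(X)+\sum_{k\le N}s_k(Y)$, applied to $B=\sum_jB_j$, then yields \eqref{eq:block-ky-fan}. This identification of the spectra of the summands is the heart of the argument. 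The only care needed is the $SS^*$ versus $S^*S$ eigenvalue equality for compact $S=P_jB^{1/2}$, which follows from the polar decomposition.

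Now fix a symmetric norming function $\Phi$. The upper bound in \eqref{eq:block-symmetric-norm} follows from \eqref{eq:block-ky-fan} together with the standard fact that weak submajorization controls every symmetric norm (Ky Fan dominance, applied to the truncations appearing in \eqref{eq:phi-norm}). Applying this to the $m$ sequences $s(A_j)$ via the triangle inequality gives $\norm{B}_\Phi\le\sum_j\norm{B_j}_\Phi=\sum_j\norm{A_j}_\Phi$. The lower bound follows from \eqref{eq:block-compression} and monotonicity of $\Phi$ in each entry. Together these also give the equivalence of ideal memberships. For $p\ge1$, \eqref{eq:block-schatten} is the special case $\Phi=\Phi_p$. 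For $0<p<1$, the lower bound in \eqref{eq:block-schatten-small} again comes from \eqref{eq:block-compression}. The upper bound is McCarthy's inequality \eqref{eq:p-triangle} applied to $B=\sum_jB_j$, using $\norm{B_j}_{\Sp_p}=\norm{A_j}_{\Sp_p}$.

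Finally, the trace formula \eqref{eq:block-trace} follows from $\operatorname{Tr}B=\sum_j\operatorname{Tr}(P_jBP_j)$ in the extended sense for positive operators. To see this, compute the trace in an orthonormal basis adapted to the decomposition, where all terms are nonnegative so the sums may be rearranged. Each term satisfies $\operatorname{Tr}(P_jBP_j)=\operatorname{Tr}A_j$.
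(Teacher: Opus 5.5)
Your proposal is correct and follows essentially the same route as the paper: the Cauchy--Schwarz block estimate for the form, the factorization $B=\sum_j B^{1/2}P_jB^{1/2}$ with the identification of the nonzero eigenvalues of $(P_jB^{1/2})^*(P_jB^{1/2})$ and $P_jBP_j=\iota_jA_j\iota_j^*$, Ky Fan's inequality, McCarthy's $p$-triangle inequality, and a Tonelli computation of the trace in an adapted basis. The differences are minor: you get compactness of $B$ from the operator inequality via weakly null sequences and $s_k(A_j)\le s_k(B)$ from the minimax principle. The paper instead uses the compactness equivalence for $Y_j^*Y_j$ and $Y_jY_j^*$ and the ideal inequality $s_k(XTY)\le\norm{X}\norm{Y}s_k(T)$.
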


\begin{proof}
Since $q$ is positive, it is Hermitian and satisfies the form Cauchy--Schwarz inequality
\begin{equation}\label{eq:block-form-cauchy-schwarz}
 \abs{q(x,y)}^2\leq q(x,x)q(y,y).
\end{equation}
Indeed, when $q(y,y)>0$, apply positivity to $x+\lambda y$ with $\lambda=-q(x,y)/q(y,y)$; when $q(y,y)=0$, varying $\lambda$ forces $q(x,y)=0$.

Write $x=x_1+\cdots+x_m$, where $x_j\in H_j$, and put $a_j=q(x_j,x_j)$. By \eqref{eq:block-form-cauchy-schwarz},
\begin{align*}
 q(x,x)
 &\leq\sum_{j,\ell=1}^m\abs{q(x_j,x_\ell)}\\
 &\leq\left(\sum_{j=1}^m\sqrt{a_j}\right)^2\\
 &\leq m\sum_{j=1}^ma_j\\
 &=m\sum_{j=1}^m\inner{A_jx_j}{x_j}_{H_j}.
\end{align*}
Also,
\begin{align*}
 q(x,x)
 &\leq\left(\sum_{j=1}^m
      \sqrt{\norm{A_j}}\norm{x_j}\right)^2\\
 &\leq\left(\sum_{j=1}^m\norm{A_j}\right)\norm{x}^2.
\end{align*}
Together with \eqref{eq:block-form-cauchy-schwarz}, this gives the following, which shows that $q$ is bounded:
\[
 \abs{q(x,y)}
 \leq\left(\sum_{j=1}^m\norm{A_j}\right)\norm{x}\norm{y}.
\]
Since $q$ is bounded, let $B$ be its positive operator. The two quadratic form estimates give \eqref{eq:block-boundedness} and \eqref{eq:block-operator-norm}. For $x,y\in H_j$, note that
\[
 \inner{\iota_j^*B\iota_jx}{y}_{H_j}
 =q(\iota_jx,\iota_jy)=\inner{A_jx}{y}_{H_j},
\]
which proves \eqref{eq:block-compressions}.

Since $\sum_jP_j=I$, equation \eqref{eq:block-factorization} gives $B=\sum_jB_j$. Letting $Y_j=P_jB^{1/2}$,
\begin{equation}\label{eq:block-products}
 Y_j^*Y_j=B_j,
 \qquad
 Y_jY_j^*=P_jBP_j=\iota_jA_j\iota_j^*.
\end{equation}
For each $j$, one product in \eqref{eq:block-products} is compact if and only if the other is. In that case they have the same nonzero eigenvalues, including multiplicity. Hence, in the compact case,
\begin{equation}\label{eq:block-singular-identities}
 s_k(B_j)=s_k(A_j),
 \qquad k\geq1.
\end{equation}
If every $A_j$ is compact, then every $B_j$ and their finite sum $B$ are compact. The converse follows from $A_j=\iota_j^*B\iota_j$.

When the operators are compact, the ideal inequality
\[
 s_k(XTY)\leq\norm{X}\norm{Y}s_k(T)
\]
for compact $T$ and bounded $X,Y$ \cite[Theorem~1.6]{Simon2005} gives $s_k(A_j)\leq s_k(B)$, proving \eqref{eq:block-compression}. Ky Fan's inequality \cite{Fan1951}, $B=\sum_jB_j$, and \eqref{eq:block-singular-identities} give \eqref{eq:block-ky-fan}.

If every $A_j$ belongs to $\mathcal I_\Phi$, then \eqref{eq:block-singular-identities} and the triangle inequality give
\[
 \norm{B}_\Phi
 \leq\sum_{j=1}^m\norm{B_j}_\Phi
 =\sum_{j=1}^m\norm{A_j}_\Phi.
\]
Conversely, if $B\in\mathcal I_\Phi$, then $A_j=\iota_j^*B\iota_j\in\mathcal I_\Phi$ and $\norm{A_j}_\Phi\leq\norm{B}_\Phi$. This proves the ideal equivalence and \eqref{eq:block-symmetric-norm}.

For $0<p<1$, the $p$-triangle inequality \eqref{eq:p-triangle} gives
\begin{align*}
 \norm{B}_{\Sp_p}^p
 &=\norm{\textstyle\sum_{j=1}^mB_j}_{\Sp_p}^p\\
 &\leq\sum_{j=1}^m\norm{B_j}_{\Sp_p}^p\\
 &=\sum_{j=1}^m\norm{A_j}_{\Sp_p}^p.
\end{align*}
For $1\leq p<\infty$, the ordinary triangle inequality gives
\[
 \norm{B}_{\Sp_p}
 \leq\sum_{j=1}^m\norm{B_j}_{\Sp_p}
 =\sum_{j=1}^m\norm{A_j}_{\Sp_p}.
\]
The lower estimates follow from \eqref{eq:block-compression}.

Finally, for orthonormal bases $\mathcal E_j$ of $H_j$, Tonelli's theorem gives
\begin{align*}
 \operatorname{Tr}B
 &=\sum_{j=1}^m\sum_{e\in\mathcal E_j}
   \inner{B\iota_je}{\iota_je}_H\\
 &=\sum_{j=1}^m\sum_{e\in\mathcal E_j}
   \inner{A_je}{e}_{H_j}\\
 &=\sum_{j=1}^m\operatorname{Tr}A_j.
\end{align*}
This proves \eqref{eq:block-trace}.
\end{proof}

\section{Application to pluriharmonic Fock space}
\label{sec:application}

To apply Proposition~\ref{prop:positive-block}, it remains to identify the second diagonal block relative to \eqref{eq:orthogonal-splitting}.

\begin{lemma}[The second diagonal block]\label{lem:second-diagonal-block}
Suppose that the holomorphic Toeplitz form is bounded, and write $A=T_\mu$. Let $P_0$ be the orthogonal projection of $\F^2_\alpha$ onto $\F^2_{\alpha,0}$, and set
\[
 A_0=P_0A|_{\F^2_{\alpha,0}}.
\]
Then the integral in \eqref{eq:toeplitz-form-intro} is absolutely convergent for every pair of functions in $\PH^2_\alpha$. Relative to \eqref{eq:orthogonal-splitting}, its two diagonal restrictions are represented by
\[
 A\quad\text{and}\quad D=JA_0J^{-1},
\]
where
\[
 J:\F^2_{\alpha,0}\to\overline{\F^2_{\alpha,0}},
 \qquad Jh=\overline h,
\]
is antiunitary. If $A$ is compact, then $D$ is compact and
\begin{equation}\label{eq:second-block-singular-values}
 s_k(D)=s_k(A_0)\leq s_k(A),
 \qquad k\geq1.
\end{equation}
\end{lemma}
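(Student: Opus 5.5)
Absolute convergence comes first, and it is an $L^2(\mu_w)$ Cauchy--Schwarz argument, where $d\mu_w=e^{-\abs{z}^2/\alpha}d\mu$. Boundedness of the holomorphic form means that $\int\abs{g}^2\,d\mu_w=\inner{Ag}{g}\leq\norm{A}\norm{g}^2_{2,\alpha}$ for all $g\in\F^2_\alpha$; absolute convergence of the diagonal integrals is part of the definition. For $h\in\F^2_{\alpha,0}$ we have $\abs{\overline h}=\abs{h}$, so $\int\abs{\overline h}^2\,d\mu_w=\inner{Ah}{h}<\infty$. If $f=g+\overline h$ as in Lemma~\ref{lem:splitting}, then $\abs{f}^2\leq2(\abs g^2+\abs h^2)$, so every element of $\PH^2_\alpha$ lies in $L^2(\mu_w)$. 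For $f_1,f_2\in\PH^2_\alpha$, Cauchy--Schwarz in $L^2(\mu_w)$ then gives absolute convergence of $\int f_1\overline{f_2}\,d\mu_w$.

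Next I identify the diagonal restrictions. On the first summand $\F^2_\alpha$ the restricted form is exactly the holomorphic form, so it is represented by $A$. On the second summand, take $\overline{h_1},\overline{h_2}$ with $h_1,h_2\in\F^2_{\alpha,0}$. Then
\[
\int\overline{h_1}\,h_2\,d\mu_w=\overline{\int h_1\overline{h_2}\,d\mu_w}=\overline{\inner{Ah_1}{h_2}}=\overline{\inner{A_0h_1}{h_2}},
\]
using $h_2\in\F^2_{\alpha,0}$ and $P_0h_2=h_2$. Conjugation is antiunitary, i.e.\ $\inner{Jx}{Jy}=\overline{\inner{x}{y}}$; the norm identity was already noted after Lemma~\ref{lem:splitting}, and the inner product identity follows by direct computation of the integral. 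Hence $\overline{\inner{A_0h_1}{h_2}}=\inner{JA_0h_1}{Jh_2}=\inner{JA_0J^{-1}\overline{h_1}}{\overline{h_2}}$. So the second restriction is represented by $D=JA_0J^{-1}$. This operator is linear, because it is a composition of two antilinear maps with a linear one. It is positive, since $\inner{D\overline h}{\overline h}=\overline{\inner{A_0h}{h}}\geq0$.

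Finally, the singular values. If $A$ is compact, then $A_0=P_0A\iota_0$ is compact, where $\iota_0$ is the inclusion. The ideal inequality $s_k(XTY)\leq\norm X\norm Y s_k(T)$ from \cite[Theorem~1.6]{Simon2005}, already used in Proposition~\ref{prop:positive-block}, gives $s_k(A_0)\leq s_k(A)$. For $D$, conjugation by the antiunitary $J$ carries an orthonormal eigenbasis of $A_0$ to one of $D$. If $A_0e=\lambda e$ with $\lambda\geq0$ real, then $DJe=JA_0e=J(\lambda e)=\bar\lambda Je=\lambda Je$. Hence $D$ is diagonalizable with the same eigenvalues and multiplicities, tending to zero, so $D$ is compact and $s_k(D)=s_k(A_0)$.

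The only delicate point is bookkeeping with the antilinear $J$. One has to check that $JA_0J^{-1}$ is genuinely linear and positive, and that conjugation of the real eigenvalues does not alter them. Both follow from the positivity of $A_0$.
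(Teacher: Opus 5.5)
Your proposal is correct and follows the paper's proof essentially step by step. It uses the same three steps: the pointwise bound $\abs{f}^2\leq2(\abs{g}^2+\abs{h}^2)$ with weighted Cauchy--Schwarz, the conjugation computation identifying the second block as $JA_0J^{-1}$, and the ideal inequality together with antiunitary transport of eigenvalues for the singular values. The only cosmetic difference is that the paper also records the explicit bound $2\norm{A}\norm{f}_{2,\alpha}\norm{u}_{2,\alpha}$ for the full form via \eqref{eq:norm-splitting}, which your argument yields at once.
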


\begin{proof}
Let $f=g+\overline h$, where $g\in\F^2_\alpha$ and $h\in\F^2_{\alpha,0}$. Then
\begin{align*}
 &\int_{\C^n}\abs{f(z)}^2e^{-\abs{z}^2/\alpha}\,d\mu(z)\\
 &\quad\leq2\int_{\C^n}
 \left(\abs{g(z)}^2+\abs{h(z)}^2\right)
 e^{-\abs{z}^2/\alpha}\,d\mu(z)\\
 &\quad=2\inner{Ag}{g}+2\inner{Ah}{h}\\
 &\quad\leq2\norm{A}
 \left(\norm{g}_{2,\alpha}^2+\norm{h}_{2,\alpha}^2\right)\\
 &\quad=2\norm{A}\norm{f}_{2,\alpha}^2.
\end{align*}
The weighted Cauchy--Schwarz inequality therefore bounds the absolute value of the integral for $f,u\in\PH^2_\alpha$ by $2\norm{A}\norm{f}_{2,\alpha}\norm{u}_{2,\alpha}$. Thus the full form is bounded, and in particular it is everywhere defined. Its restriction to the first summand is represented by $A$.

Let $\iota_0:\F^2_{\alpha,0}\to\F^2_\alpha$ denote inclusion, so $ A_0=\iota_0^*A\iota_0.$ For $h,k\in\F^2_{\alpha,0}$, the restriction to the second summand is
\begin{align*}
 &\int_{\C^n}(Jh)(z)\overline{(Jk)(z)}
 e^{-\abs{z}^2/\alpha}\,d\mu(z)\\
 &\quad=\int_{\C^n}\overline{h(z)}k(z)
 e^{-\abs{z}^2/\alpha}\,d\mu(z)\\
 &\quad=\overline{\int_{\C^n}h(z)\overline{k(z)}
 e^{-\abs{z}^2/\alpha}\,d\mu(z)}\\
 &\quad=\overline{\inner{Ah}{k}}
 =\overline{\inner{P_0Ah}{k}}
 =\overline{\inner{A_0h}{k}}\\
 &\quad=\inner{JA_0h}{Jk}.
\end{align*}
Hence this restriction is represented by $D=JA_0J^{-1}$. Although $J$ and $J^{-1}$ are conjugate linear, their composition with $A_0$ is linear.

Suppose now that $A$ is compact. Since $A_0=\iota_0^*A\iota_0$, the ideal inequality for singular values gives $s_k(A_0)\leq s_k(A)$. Antiunitary conjugation preserves the eigenvalues of a positive compact operator, including multiplicity, so $D$ is compact and $s_k(D)=s_k(A_0)$. This proves \eqref{eq:second-block-singular-values}.
\end{proof}

\begin{proof}[Proof of Theorem~\ref{thm:main}]
By Lemma~\ref{lem:holomorphic-criterion}, it remains to compare the holomorphic and pluriharmonic forms.

Suppose first that the second condition holds, and set $A=T_\mu$. Lemma~\ref{lem:second-diagonal-block} shows that the full form is bounded and has diagonal operators $A$ and $D=JA_0J^{-1}$. By \eqref{eq:second-block-singular-values},
\begin{equation}\label{eq:second-block-schatten}
 \norm{D}_{\Sp_p}^p
 =\norm{A_0}_{\Sp_p}^p
 \leq\norm{A}_{\Sp_p}^p.
\end{equation}
Apply Proposition~\ref{prop:positive-block} with $m=2$ and set $B=T_\mu^{\mathrm{ph}}$. Then $B$ is compact, and for $0<p<1$,
\begin{align*}
 \norm{B}_{\Sp_p}^p
 &\leq\norm{A}_{\Sp_p}^p+\norm{D}_{\Sp_p}^p\\
 &\leq2\norm{A}_{\Sp_p}^p.
\end{align*}
For $p\geq1$, it gives
\begin{align*}
 \norm{B}_{\Sp_p}
 &\leq\norm{A}_{\Sp_p}+\norm{D}_{\Sp_p}\\
 &\leq2\norm{A}_{\Sp_p},
\end{align*}
and hence
\[
 \norm{B}_{\Sp_p}^p\leq2^p\norm{A}_{\Sp_p}^p.
\]
This proves the upper estimate in \eqref{eq:quantitative-estimate}.

Conversely, suppose that the first condition holds, and put $B=T_\mu^{\mathrm{ph}}$. Let $\iota:\F^2_\alpha\to\PH^2_\alpha$ be the inclusion into the first summand. For $g,k\in\F^2_\alpha$,
\begin{align*}
 &\abs{\int_{\C^n}g(z)\overline{k(z)}
 e^{-\abs{z}^2/\alpha}\,d\mu(z)}\\
 &\quad=\abs{\inner{B\iota g}{\iota k}_{2,\alpha}}
 \leq\norm{B}\norm{g}_{2,\alpha}\norm{k}_{2,\alpha}.
\end{align*}
Thus the holomorphic form is bounded. Moreover,
\begin{align*}
 \inner{\iota^*B\iota g}{k}_{2,\alpha}
 &=\inner{B\iota g}{\iota k}_{2,\alpha}\\
 &=\int_{\C^n}g(z)\overline{k(z)}
 e^{-\abs{z}^2/\alpha}\,d\mu(z)\\
 &=\inner{T_\mu g}{k}_{2,\alpha},
\end{align*}
so $T_\mu=\iota^*B\iota$. Therefore
\[
 s_k(T_\mu)\leq s_k(B),
 \qquad k\geq1,
\]
and
\begin{align*}
 \norm{T_\mu}_{\Sp_p}^p
 &=\sum_{k=1}^\infty s_k(T_\mu)^p\\
 &\leq\sum_{k=1}^\infty s_k(B)^p
 =\norm{T_\mu^{\mathrm{ph}}}_{\Sp_p}^p.
\end{align*}
This proves the converse implication and the lower estimate. The holomorphic criterion now gives the equivalence with the local mass condition.
\end{proof}

\begin{proof}[Proof of Corollary~\ref{cor:symmetric-ideals}]
Suppose first that the holomorphic form is bounded and that $A:=T_\mu$ belongs to $\mathcal I_\Phi$. By Lemma~\ref{lem:second-diagonal-block}, the full form is bounded and its diagonal operators are $A$ and $D=JA_0J^{-1}$. 
The pointwise estimate \eqref{eq:second-block-singular-values} and coordinatewise monotonicity of symmetric norming functions give
\[
\norm{D}_\Phi=\norm{A_0}_\Phi\leq\norm{A}_\Phi.
\]
Proposition~\ref{prop:positive-block} therefore shows that $B:=T_\mu^{\mathrm{ph}}$ belongs to $\mathcal I_\Phi$ and that
\[
 \norm{B}_\Phi
 \leq\norm{A}_\Phi+\norm{D}_\Phi
 \leq2\norm{A}_\Phi.
\]
More precisely, \eqref{eq:block-ky-fan} gives, for every $N\geq1$,
\begin{align*}
 \sum_{k=1}^Ns_k(B)
 &\leq\sum_{k=1}^Ns_k(A)+\sum_{k=1}^Ns_k(D)\\
 &\leq2\sum_{k=1}^Ns_k(A),
\end{align*}
which is \eqref{eq:ky-fan-comparison}.

Conversely, suppose that the pluriharmonic form is bounded and that $B:=T_\mu^{\mathrm{ph}}$ belongs to $\mathcal I_\Phi$. As in the proof of Theorem~\ref{thm:main},
\[
 T_\mu=\iota^*B\iota,
\]
and hence $s_k(T_\mu)\leq s_k(B)$ for every $k$. It follows that $T_\mu\in\mathcal I_\Phi$ and
\[
 \norm{T_\mu}_\Phi\leq\norm{B}_\Phi.
\]
This proves the equivalence and \eqref{eq:symmetric-norm-estimate}.

For every normalized symmetric norming function, the norm of a positive operator of rank one is its nonzero eigenvalue. The point masses in Proposition~\ref{prop:sharp-constants} give ratios $1$ at the origin and $2-e^{-\abs{a}^2/\alpha}$ at $a$. Letting $\abs{a}\to\infty$ proves optimality of both constants.
\end{proof}

\section{Sharp constants and the trace identity}
\label{sec:sharpness}

\begin{proposition}[Trace identity]\label{prop:trace-identity}
Let $\mu$ be a positive Borel measure on $\C^n$. The following are equivalent:
\begin{enumerate}
\item $\mu(\C^n)<\infty$;
\item the holomorphic form is bounded and $T_\mu\in\Sp_1$;
\item the pluriharmonic form is bounded and $T_\mu^{\mathrm{ph}}\in\Sp_1$.
\end{enumerate}
Whenever these conditions hold,
\begin{align}
 \norm{T_\mu}_{\Sp_1}
 &=\mu(\C^n),\label{eq:holomorphic-trace}\\
 \norm{T_\mu^{\mathrm{ph}}}_{\Sp_1}
 &=2\mu(\C^n)
 -\int_{\C^n}e^{-\abs{z}^2/\alpha}\,d\mu(z).
 \label{eq:pluriharmonic-trace}
\end{align}

If $\mu\neq0$, then
\begin{equation}\label{eq:trace-ratio}
 \frac{\norm{T_\mu^{\mathrm{ph}}}_{\Sp_1}}
 {\norm{T_\mu}_{\Sp_1}}
 =2-\frac{1}{\mu(\C^n)}
 \int_{\C^n}e^{-\abs{z}^2/\alpha}\,d\mu(z).
\end{equation}
As $\mu$ ranges over the nonzero finite positive measures, this ratio takes every value in $[1,2)$. It equals $1$ precisely when $\mu$ is concentrated at the origin.
\end{proposition}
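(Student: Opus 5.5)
The plan is to compute the holomorphic trace directly from the orthonormal basis $\{e_m\}$. After that, the pluriharmonic trace will come from the trace identity \eqref{eq:block-trace} of Proposition~\ref{prop:positive-block}, applied to the splitting \eqref{eq:orthogonal-splitting} with the diagonal blocks identified in Lemma~\ref{lem:second-diagonal-block}.

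For the equivalence of (1) and (2), I work with the extended trace of the positive form. For any orthonormal basis $\{e_m\}$ of $\F^2_\alpha$, Tonelli and \eqref{eq:holomorphic-kernel} give
\[
 \sum_m\int_{\C^n}\abs{e_m(z)}^2e^{-\abs{z}^2/\alpha}\,d\mu(z)
 =\int_{\C^n}K_z(z)e^{-\abs{z}^2/\alpha}\,d\mu(z)=\mu(\C^n),
\]
since $K_z(z)=e^{\abs{z}^2/\alpha}$.

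If the form is bounded and $T_\mu\in\Sp_1$, then the left side equals $\operatorname{Tr}T_\mu=\norm{T_\mu}_{\Sp_1}$, so $\mu(\C^n)<\infty$.

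Conversely, suppose $\mu(\C^n)<\infty$. Then the local masses are bounded, and $\mu(B(\cdot,r))\in L^1$ because $\int\mu(B(z,r))\,dV(z)=V(B(0,r))\mu(\C^n)$ by Fubini. Lemma~\ref{lem:holomorphic-criterion} then gives boundedness of the form and $T_\mu\in\Sp_1$, and the computation above yields \eqref{eq:holomorphic-trace}. Alternatively, boundedness follows directly from $\abs{f(z)}^2e^{-\abs{z}^2/\alpha}\le\norm{f}^2$, and a positive operator with finite diagonal sum is trace class.

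For (3) and \eqref{eq:pluriharmonic-trace}, I apply Theorem~\ref{thm:main} with $p=1$, which gives the equivalence of (2) and (3). The trace identity \eqref{eq:block-trace} then gives $\operatorname{Tr}T_\mu^{\mathrm{ph}}=\operatorname{Tr}A+\operatorname{Tr}D$. Antiunitary conjugation preserves traces of positive operators, so $\operatorname{Tr}D=\operatorname{Tr}A_0$.

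To compute $\operatorname{Tr}A_0$, take the orthonormal basis $\{e_m\}_{m\neq0}$ of $\F^2_{\alpha,0}$. Since $e_0=1$, Tonelli gives
\[
 \operatorname{Tr}A_0=\int_{\C^n}\bigl(K_z(z)-1\bigr)e^{-\abs{z}^2/\alpha}\,d\mu(z)=\mu(\C^n)-\int_{\C^n} e^{-\abs{z}^2/\alpha}\,d\mu(z).
\]
Adding $\operatorname{Tr}A=\mu(\C^n)$ gives \eqref{eq:pluriharmonic-trace}. As a cross-check, the same value comes from integrating $K^{\mathrm{ph}}_z(z)e^{-\abs{z}^2/\alpha}=2-e^{-\abs{z}^2/\alpha}$ against $\mu$, using \eqref{eq:ph-kernel}. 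Dividing by $\mu(\C^n)$ gives \eqref{eq:trace-ratio}.

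For the range of the ratio, observe that $0<e^{-\abs{z}^2/\alpha}\le1$, with equality only at $z=0$. Hence the normalized integral $\mu(\C^n)^{-1}\int e^{-\abs{z}^2/\alpha}\,d\mu$ lies in $(0,1]$, so the ratio lies in $[1,2)$. The ratio equals $1$ exactly when $\int(1-e^{-\abs{z}^2/\alpha})\,d\mu=0$. Since the integrand is positive off the origin, this happens exactly when $\mu$ is concentrated at the origin. To see that every value in $[1,2)$ is attained, use point masses $\delta_a$: these give the ratio $2-e^{-\abs{a}^2/\alpha}$, which sweeps $[1,2)$ continuously as $\abs{a}$ runs over $[0,\infty)$.

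The only delicate step is justifying that the extended trace computed from the diagonal of the form equals the Schatten-$1$ norm. This holds because the operators are positive, so the trace is basis-independent in the extended sense, and boundedness of the form is already secured by Lemma~\ref{lem:holomorphic-criterion} and Lemma~\ref{lem:second-diagonal-block}.
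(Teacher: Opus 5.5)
Your proposal is correct and takes essentially the same route as the paper. Both use Tonelli over the monomial basis for $\operatorname{Tr}T_\mu$ and for the compression $A_0$, the antiunitary identification $\operatorname{Tr}D=\operatorname{Tr}A_0$, the block trace identity \eqref{eq:block-trace}, and point masses for the range of the ratio. The one difference is that you route (1)$\Rightarrow$(2) and (2)$\Leftrightarrow$(3) through Lemma~\ref{lem:holomorphic-criterion} and Theorem~\ref{thm:main} with $p=1$, whereas the paper uses your elementary alternative: boundedness of both forms from $e^{-\abs{z}^2/\alpha}K_z^H(z)\leq2$, and trace-class membership from the finite diagonal sum, which avoids the Isralowitz--Virtanen--Wolf machinery.
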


\begin{proof}
We first suppose that $\mu(\C^n)<\infty$. If $H$ is either $\F^2_\alpha$ or $\PH^2_\alpha$, the reproducing property and the Cauchy--Schwarz inequality give
\[
 \abs{f(z)}^2
 =\abs{\inner{f}{K_z^H}_H}^2
 \leq\norm{f}_H^2K_z^H(z).
\]
For the holomorphic space,
\[
 e^{-\abs{z}^2/\alpha}K_z(z)=1,
\]
while \eqref{eq:ph-kernel} gives
\[
 e^{-\abs{z}^2/\alpha}K_z^{\mathrm{ph}}(z)
 =2-e^{-\abs{z}^2/\alpha}\leq2.
\]
Therefore,
\[
 \int_{\C^n}\abs{f(z)}^2e^{-\abs{z}^2/\alpha}\,d\mu(z)
 \leq2\mu(\C^n)\norm{f}_H^2.
\]
The Cauchy--Schwarz inequality in this weighted $L^2(d\mu)$ space then shows that both forms are bounded.

Let
\[
 e_m(z)=\frac{z^m}{\alpha^{\abs{m}/2}\sqrt{m!}},
 \qquad m\in\mathbb N^n.
\]
As noted in Section~\ref{sec:splitting}, $(e_m)$ is an orthonormal basis of $\F^2_\alpha$. For every $z\in\C^n$,
\begin{align*}
 \sum_{m\in\mathbb N^n}\abs{e_m(z)}^2
 &=\prod_{j=1}^n\sum_{m_j=0}^\infty
   \frac{\abs{z_j}^{2m_j}}{\alpha^{m_j}m_j!}\\
 &=\prod_{j=1}^n e^{\abs{z_j}^2/\alpha}
 =e^{\abs{z}^2/\alpha}.
\end{align*}
Since the summands are nonnegative, Tonelli's theorem gives
\begin{align*}
 \sum_{m\in\mathbb N^n}\inner{T_\mu e_m}{e_m}_{2,\alpha}
 &=\sum_{m\in\mathbb N^n}\int_{\C^n}
   \abs{e_m(z)}^2e^{-\abs{z}^2/\alpha}\,d\mu(z)\\
 &=\int_{\C^n}e^{-\abs{z}^2/\alpha}
   \sum_{m\in\mathbb N^n}\abs{e_m(z)}^2\,d\mu(z)\\
 &=\int_{\C^n}1\,d\mu(z)=\mu(\C^n).
\end{align*}
Thus $T_\mu$ is trace class and, since it is positive,
\[
 \norm{T_\mu}_{\Sp_1}=\operatorname{Tr}(T_\mu)=\mu(\C^n).
\]
In one dimension, this is the holomorphic trace formula in \cite[Proposition~4.1]{IsralowitzZhu2010}. We include the calculation in $\C^n$ because it also gives the pluriharmonic correction term.

The space $\F^2_{\alpha,0}$ has the orthonormal basis $(e_m)_{m\neq0}$. For the compression $A_0$ of $T_\mu$ to this space, Tonelli's theorem gives
\begin{align*}
 \operatorname{Tr}A_0
 &=\sum_{m\in\mathbb N^n\setminus\{0\}}
   \inner{T_\mu e_m}{e_m}_{2,\alpha}\\
 &=\int_{\C^n}e^{-\abs{z}^2/\alpha}
   \sum_{m\in\mathbb N^n\setminus\{0\}}\abs{e_m(z)}^2\,d\mu(z)\\
 &=\int_{\C^n}e^{-\abs{z}^2/\alpha}
   \left(e^{\abs{z}^2/\alpha}-1\right)d\mu(z)\\
 &=\mu(\C^n)
 -\int_{\C^n}e^{-\abs{z}^2/\alpha}\,d\mu(z).
\end{align*}
By Lemma~\ref{lem:second-diagonal-block}, the second diagonal block $D$ of $T_\mu^{\mathrm{ph}}$ is an antiunitary copy of $A_0$. Hence
\[
 \operatorname{Tr}D=\operatorname{Tr}A_0.
\]
The trace identity \eqref{eq:block-trace} now gives
\begin{align*}
 \operatorname{Tr}(T_\mu^{\mathrm{ph}})
 &=\operatorname{Tr}(T_\mu)+\operatorname{Tr}D\\
 &=2\mu(\C^n)
 -\int_{\C^n}e^{-\abs{z}^2/\alpha}\,d\mu(z).
\end{align*}
The expression on the right is finite, so $T_\mu^{\mathrm{ph}}$ belongs to the trace class. Positivity gives
\[
 \norm{T_\mu^{\mathrm{ph}}}_{\Sp_1}
 =\operatorname{Tr}(T_\mu^{\mathrm{ph}}).
\]

Conversely, if $T_\mu\in\Sp_1$, the same nonnegative sum and Tonelli's theorem give
\[
 \mu(\C^n)=\operatorname{Tr}(T_\mu)<\infty.
\]
If $T_\mu^{\mathrm{ph}}\in\Sp_1$, then its compression to $\F^2_\alpha$ is $T_\mu\in\Sp_1$, and the preceding argument again gives $\mu(\C^n)<\infty$. This proves the equivalence and the two trace identities.

For nonzero $\mu$, the integrand satisfies
\[
 0<e^{-\abs{z}^2/\alpha}\leq1.
\]
Therefore the ratio in \eqref{eq:trace-ratio} belongs to $[1,2)$. It equals $1$ if and only if
\[
 \int_{\C^n}\left(1-e^{-\abs{z}^2/\alpha}\right)d\mu(z)=0,
\]
which holds if and only if $\mu$ is concentrated on $\{0\}$. Finally, given $c\in[1,2)$, choose $a\in\C^n$ so that
\[
 e^{-\abs{a}^2/\alpha}=2-c
\]
and take $\mu=\delta_a$; we conclude \eqref{eq:trace-ratio} equals $c$.
\end{proof}

\begin{proposition}\label{prop:sharp-constants}
For every $n\geq1$ and $0<p<\infty$, both constants in \eqref{eq:quantitative-estimate} are optimal.
\end{proposition}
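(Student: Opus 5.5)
Both constants will be tested on point masses $\mu=\delta_a$, for which both Toeplitz operators have rank one. From the form definition, $\inner{T_\mu f}{g}=e^{-\abs{a}^2/\alpha}f(a)\overline{g(a)}=e^{-\abs{a}^2/\alpha}\inner{f}{K_a^H}\overline{\inner{g}{K_a^H}}$. Hence $T_\mu=e^{-\abs{a}^2/\alpha}\,K_a^H\otimes K_a^H$ for $H=\F^2_\alpha$ or $\PH^2_\alpha$. This is a positive rank-one operator whose single nonzero eigenvalue is
\[
 e^{-\abs{a}^2/\alpha}\norm{K_a^H}^2=e^{-\abs{a}^2/\alpha}K_a^H(a).
\]
In the holomorphic case this eigenvalue equals $1$. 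In the pluriharmonic case, \eqref{eq:ph-kernel} gives $2-e^{-\abs{a}^2/\alpha}$. The same numbers also follow from Proposition~\ref{prop:trace-identity}, since for a positive rank-one operator the trace equals the unique eigenvalue. Consequently, for every $0<p<\infty$,
\[
 \norm{T_{\delta_a}}_{\Sp_p}^p=1,
 \qquad
 \norm{T_{\delta_a}^{\mathrm{ph}}}_{\Sp_p}^p=\bigl(2-e^{-\abs{a}^2/\alpha}\bigr)^p.
\]

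For the lower constant $1$ we take $a=0$. The pluriharmonic eigenvalue is then $2-1=1$, so the two sides of the lower inequality in \eqref{eq:quantitative-estimate} coincide. No constant larger than $1$ can therefore be placed in front of $\norm{T_\mu}_{\Sp_p}^p$.

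For the upper constant with $p\geq1$, we let $\abs{a}\to\infty$. The ratio $\bigl(2-e^{-\abs{a}^2/\alpha}\bigr)^p$ then tends to $2^p=2^{\max\{1,p\}}$. So no constant smaller than $2^p$ works.

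The main obstacle is the range $0<p<1$, where the target constant is $2$. Point masses only give ratios up to $2^p<2$, so a different example is needed in which $T^{\mathrm{ph}}_\mu$ roughly has the eigenvalues of $T_\mu$ doubled in multiplicity, not doubled in size. For this I would use $\mu=\delta_a+\delta_{b}$ with $\abs{a},\abs{b}\to\infty$, or more simply a single far point paired with its mirror structure. The first attempt is to look at $T_{\delta_a}$ and note that in $\PH^2_\alpha$ the kernel $K_a^{\mathrm{ph}}=K_a+(\overline{K_a}-1)$ splits into two nearly orthogonal pieces, $K_a$ and $\overline{K_a-1}$, of nearly equal norm. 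The rank-one structure prevents this from producing two eigenvalues, so a single point mass will not do.

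Instead I would take $\mu=\delta_a+\delta_{\overline a}$ in dimension $n=1$; in higher dimensions the analogous choice is $\delta_a+\delta_{\bar a}$ coordinatewise, or generally $\delta_a+\delta_b$ with $b$ chosen so that $\overline{K_b}$ is close to $K_a$ direction-wise. In the holomorphic space, $T_\mu$ has two eigenvalues near $1$, since $K_a$ and $K_b$ are nearly orthogonal when $\abs{a-b}\to\infty$. This gives $\norm{T_\mu}_{\Sp_p}^p\approx2$. In $\PH^2_\alpha$, $T^{\mathrm{ph}}_\mu$ is positive of rank two, with Gram matrix built from $K_a^{\mathrm{ph}},K_b^{\mathrm{ph}}$. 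Here $\overline{K_b}$ overlaps $K_a$ unless the points are also separated, which complicates things.

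A cleaner plan for $0<p<1$ is to take many widely separated points $a_1,\dots,a_N$ with unequal masses $t_j$. Then $T_\mu$ is approximately the diagonal matrix $\mathrm{diag}(t_j)$. The operator $T^{\mathrm{ph}}_\mu$ is approximately the Gram-weighted operator on the vectors $K^{\mathrm{ph}}_{a_j}$, whose norms squared are about $2e^{\abs{a_j}^2/\alpha}$ and which are nearly orthogonal, so its eigenvalues are about $2t_j$. That again only gives the ratio $2^p$.

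I therefore expect the honest obstacle to be that $2$ for $p<1$ must come from a spectrum-splitting mechanism, in which the $\Sp_p$ sum is superadditive on equal halves. This has to be verified by an explicit construction that I would attempt to take from the paper's proof: most likely a measure for which $T_\mu$ has one eigenvalue $\lambda$ while $T^{\mathrm{ph}}_\mu$ has two eigenvalues near $\lambda$, for instance via a rotation-invariant measure on a large sphere acting on a specific monomial pair $e_m$, $\overline{e_m}$. I would pursue the radial-measure route: for radial $\mu$ both operators are diagonal in monomials and $\overline{\text{monomials}}$. Choosing $\mu$ so that $T_\mu$ is concentrated essentially on one high-degree monomial $e_m$ makes $T_\mu^{\mathrm{ph}}$ have the same eigenvalue on both $e_m$ and $\overline{e_m}$, giving ratio $\to2$.
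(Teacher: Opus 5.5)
Your treatment of the lower constant (via $\mu=\delta_0$) and of the upper constant for $p\geq1$ (via $\mu=\delta_a$ with $\abs{a}\to\infty$) is correct and is exactly the paper's argument. The gap is the range $0<p<1$. You correctly identify it as the real difficulty, but you do not settle it. The exploratory attempts are abandoned, and the final sketch rests on a mechanism that is neither achievable nor needed. For a positive rotation-invariant measure in one variable, the holomorphic eigenvalues are Poisson mixtures $\lambda_k=\int_0^\infty e^{-t}t^k(k!)^{-1}\,d\nu(t)$. The AM--GM inequality then gives $\lambda_{k-1}+\lambda_{k+1}\geq2\sqrt{k/(k+1)}\,\lambda_k$, so $T_\mu$ can never be ``concentrated essentially on one high-degree monomial.'' What actually matters is that the constant function is the only eigenvector whose eigenvalue is not duplicated in $T_\mu^{\mathrm{ph}}$. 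One therefore needs the constant eigenvalue to become negligible, not concentration on a single monomial.

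To close the gap you need three verified steps.

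First, for $\mu$ invariant under $z\mapsto e^{i\theta}z$, the mixed block vanishes, because $\int e_j e_k e^{-\abs{z}^2/\alpha}\,d\mu=0$ whenever $\abs{j}+\abs{k}\geq1$. Hence $T_\mu^{\mathrm{ph}}$ is the orthogonal sum of $T_\mu$ and the antiunitary copy of its compression $A_0$.

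Second, suppose $T_\mu$ is moreover diagonal in the monomial basis, with eigenvalue $\lambda_0$ on the constants and $\lambda_m$ on $e_m$. Then the nonzero eigenvalues of $T_\mu^{\mathrm{ph}}$ are $\lambda_0$ once and each $\lambda_m$, $m\neq0$, twice. Consequently
\[
 \frac{\norm{T_\mu^{\mathrm{ph}}}_{\Sp_p}^p}{\norm{T_\mu}_{\Sp_p}^p}
 =2-\frac{\lambda_0^p}{\sum_m\lambda_m^p}.
\]

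Third, you need a concrete family in which $\lambda_0^p/\sum_m\lambda_m^p\to0$. The paper uses normalized arclength $\sigma_R$ on $\{(Re^{i\theta},0,\ldots,0)\}$. Monomials involving $z_2,\ldots,z_n$ vanish on its support, and the remaining eigenvalues are $e^{-t}t^k/k!$ with $t=R^2/\alpha$; these lie in every $\ell^p$. The ratio is then $2-\bigl(\sum_k t^{pk}/(k!)^p\bigr)^{-1}\geq2-(1+t^p)^{-1}$, which tends to $2$ as $R\to\infty$.

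Without this explicit computation, optimality of the constant $2$ for $0<p<1$ remains unproved.
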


\begin{proof}
Let $\delta_a$ denote the unit point mass at $a\in\C^n$. Taking $\mu=\delta_0$, we have $K_0=K_0^{\mathrm{ph}}=1$, so both Toeplitz operators are the projection of rank one onto the constant functions. Therefore
\[
 \norm{T_{\delta_0}}_{\Sp_p}^p
 =\norm{T_{\delta_0}^{\mathrm{ph}}}_{\Sp_p}^p=1,
\]
so the constant in the left inequality cannot be smaller than $1$.

We next consider the right inequality when $p\geq1$. Let $\mu=\delta_a$, where $a\in\C^n$; on either reproducing kernel Hilbert space $H$, denote its kernel at $a$ by $K_a^H$. The form is
\begin{align*}
 &f(a)\overline{g(a)}e^{-\abs{a}^2/\alpha}\\
 &\quad=e^{-\abs{a}^2/\alpha}
 \inner{f}{K_a^H}_H\overline{\inner{g}{K_a^H}_H}
\end{align*}
and thus its operator is
\[
 f\mapsto e^{-\abs{a}^2/\alpha}
 \inner{f}{K_a^H}_H K_a^H,
\]
whose only nonzero eigenvalue is $e^{-\abs{a}^2/\alpha}\norm{K_a^H}_H^2$. For the holomorphic space,
\[
 e^{-\abs{a}^2/\alpha}\norm{K_a}_{2,\alpha}^2
 =e^{-\abs{a}^2/\alpha}K_a(a)=1.
\]
For the pluriharmonic space, \eqref{eq:ph-kernel} gives
\begin{align*}
 e^{-\abs{a}^2/\alpha}\norm{K_a^{\mathrm{ph}}}_{2,\alpha}^2
 &=e^{-\abs{a}^2/\alpha}K_a^{\mathrm{ph}}(a)\\
 &=e^{-\abs{a}^2/\alpha}
   \left(2e^{\abs{a}^2/\alpha}-1\right)\\
 &=2-e^{-\abs{a}^2/\alpha}.
\end{align*}
Therefore,
\[
 \frac{\norm{T_{\delta_a}^{\mathrm{ph}}}_{\Sp_p}^p}
 {\norm{T_{\delta_a}}_{\Sp_p}^p}
 =\left(2-e^{-\abs{a}^2/\alpha}\right)^p
 \to2^p
\]
as $\abs{a}\to\infty$; hence $2^p$ is optimal for $p\geq1$.

When $0<p<1$, point masses give only the limiting ratio $2^p<2$, so we use circle measures instead; for these measures, every nonconstant eigenvalue occurs with twice its holomorphic multiplicity.

For $R>0$, let $\sigma_R$ be normalized arclength measure on
\[
 \{(Re^{i\theta},0,\ldots,0):0\leq\theta<2\pi\},
\]
and put $t=R^2/\alpha$. For $k\geq0$, set
\[
 e_k(z)=\frac{z_1^k}{\alpha^{k/2}\sqrt{k!}};
\]
then for $j,k\geq0$, direct integration gives
\begin{align*}
 &\int_{\C^n}e_j(z)\overline{e_k(z)}
 e^{-\abs{z}^2/\alpha}\,d\sigma_R(z)\\
 &\quad=\frac{e^{-t}t^{(j+k)/2}}{\sqrt{j!k!}}
 \frac{1}{2\pi}\int_0^{2\pi}e^{i(j-k)\theta}\,d\theta\\
 &\quad=\delta_{jk}e^{-t}\frac{t^k}{k!}.
\end{align*}
The standard monomial basis is
\[
 e_m(z)=\frac{z^m}{\alpha^{\abs{m}/2}\sqrt{m!}}.
\]
If $m_\ell>0$ for some $\ell\geq2$, then $e_m$ vanishes on the support of $\sigma_R$. The remaining basis vectors are precisely $e_{(k,0,\ldots,0)}=e_k$. Hence the nonzero eigenvalues of $T_{\sigma_R}$ are
\[
 a_k=e^{-t}\frac{t^k}{k!},
 \qquad k\geq0.
\]
The sequence $(a_k)_{k\geq0}$ belongs to $\ell^p$ for every $p>0$, since
\[
 \frac{a_{k+1}^p}{a_k^p}
 =\left(\frac{t}{k+1}\right)^p\to0.
\]
The same calculation on the antiholomorphic summand gives the eigenvalues $a_k$, $k\geq1$. Moreover, for $j\geq0$ and $k\geq1$,
\begin{align*}
 &\int_{\C^n}e_j(z)e_k(z)e^{-\abs{z}^2/\alpha}
 \,d\sigma_R(z)\\
 &\quad=\frac{e^{-t}t^{(j+k)/2}}{\sqrt{j!k!}}
 \frac{1}{2\pi}\int_0^{2\pi}e^{i(j+k)\theta}\,d\theta=0.
\end{align*}
Thus both mixed blocks vanish, and the nonzero eigenvalues of $T_{\sigma_R}^{\mathrm{ph}}$ are $a_0$ once and $a_k$ twice for $k\geq1$. Therefore
\begin{align*}
 \frac{\norm{T_{\sigma_R}^{\mathrm{ph}}}_{\Sp_p}^p}
 {\norm{T_{\sigma_R}}_{\Sp_p}^p}
 &=\frac{a_0^p+2\sum_{k=1}^\infty a_k^p}
 {\sum_{k=0}^\infty a_k^p}\\
 &=2-\frac{a_0^p}{\sum_{k=0}^\infty a_k^p}\\
 &=2-\frac{1}{\displaystyle
 \sum_{k=0}^\infty\frac{t^{pk}}{(k!)^p}}.
\end{align*}
Since
\[
 \sum_{k=0}^\infty\frac{t^{pk}}{(k!)^p}\geq1+t^p
 \to\infty,
\]
this ratio tends to $2$ as $R\to\infty$. Hence $2$ is optimal for $0<p<1$.
\end{proof}

\section{Generalized weights and the inherited norm}
\label{sec:generalized-weights}

\subsection{The canonical direct sum}

Let
\[
 d^c=\frac{i}{4}(\overline\partial-\partial),
 \qquad \omega_0=dd^c\abs{z}^2
\]
and let $\phi\in C^2(\C^n;\mathbb R)$ satisfy
\begin{equation}\label{eq:generalized-weight}
 c\omega_0\leq dd^c\phi\leq C\omega_0
\end{equation}
for some $c,C>0$. The generalized holomorphic Fock space is
\[
 \F_\phi^2
 =\left\{g\in\operatorname{Hol}(\C^n):
 \norm{g}_\phi^2
 =\int_{\C^n}\abs{g(z)}^2e^{-2\phi(z)}\dV(z)<\infty\right\}.
\]
No radiality or circle invariance is assumed.

Let
\[
 \mathcal C_\phi=\F_\phi^2\cap\mathbb C,
 \qquad
 \F_{\phi,\circ}^2=\F_\phi^2\ominus\mathcal C_\phi,
\]
so that $\mathcal C_\phi$ is either $\{0\}$ or $\mathbb C1$. We define
\begin{equation}\label{eq:generalized-direct-sum}
 \PH_{\phi,\oplus}^2
 =\left\{g+\overline h:
 g\in\F_\phi^2,\ h\in\F_{\phi,\circ}^2\right\}
\end{equation}
and equip this space with the norm
\begin{equation}\label{eq:generalized-direct-sum-norm}
 \norm{g+\overline h}_{\phi,\oplus}^2
 =\norm{g}_\phi^2+\norm{h}_\phi^2.
\end{equation}
The representation in \eqref{eq:generalized-direct-sum} is unique: if $g+\overline h=0$, then $g=-\overline h$ is constant; the constant vanishes either because constants do not belong to $\F_\phi^2$, or because $h$ is orthogonal to $\mathcal C_\phi$. Hence one has the orthogonal direct sum
\[
 \PH_{\phi,\oplus}^2 =\F_\phi^2\oplus\overline{\F_{\phi,\circ}^2}.
\] 
The direct sum inner product need not agree with the inner product inherited from $L^2(\C^n,e^{-2\phi}dV)$.

For a positive Borel measure $\mu$, consider the form
\begin{equation}\label{eq:generalized-toeplitz-form}
 q_{\mu,\phi}(f,u)
 =\int_{\C^n}f(z)\overline{u(z)}e^{-2\phi(z)}\,d\mu(z).
\end{equation}
When its restriction to $\F_\phi^2$ is bounded, we denote the representing operator by $T_{\mu,\phi}$; when it is bounded on $\PH_{\phi,\oplus}^2$, we write $T_{\mu,\phi}^{\mathrm{ph},\oplus}$.

\begin{theorem}[Direct sum criterion]
\label{thm:generalized-direct-sum}
Let $\phi$ satisfy \eqref{eq:generalized-weight}, let $\mu$ be a positive Borel measure on $\C^n$, let $0<p<\infty$, and fix $r>0$. The following statements are equivalent:
\begin{enumerate}
\item The form \eqref{eq:generalized-toeplitz-form} is bounded on $\PH_{\phi,\oplus}^2$, and $T_{\mu,\phi}^{\mathrm{ph},\oplus}\in\Sp_p$.
\item The holomorphic form is bounded on $\F_\phi^2$, and $T_{\mu,\phi}\in\Sp_p$.
\item The function $z\mapsto\mu(B(z,r))$ belongs to $L^p(\C^n,dV)$.
\end{enumerate}
The third condition is independent of $r>0$. Whenever these conditions hold,
\begin{equation}\label{eq:generalized-schatten-comparison}
 \norm{T_{\mu,\phi}}_{\Sp_p}^p
 \leq\norm{T_{\mu,\phi}^{\mathrm{ph},\oplus}}_{\Sp_p}^p
 \leq2^{\max\{1,p\}}\norm{T_{\mu,\phi}}_{\Sp_p}^p.
\end{equation}

For every symmetrically normed ideal $(\mathcal I_\Phi,\norm{\cdot}_\Phi)$, membership of $T_{\mu,\phi}$ and $T_{\mu,\phi}^{\mathrm{ph},\oplus}$ is also equivalent, with boundedness of the forms understood, and
\begin{equation}\label{eq:generalized-symmetric-comparison}
 \norm{T_{\mu,\phi}}_\Phi
 \leq\norm{T_{\mu,\phi}^{\mathrm{ph},\oplus}}_\Phi
 \leq2\norm{T_{\mu,\phi}}_\Phi.
\end{equation}
More precisely, for every $N\geq1$,
\begin{equation}\label{eq:generalized-ky-fan}
 \sum_{k=1}^Ns_k(T_{\mu,\phi}^{\mathrm{ph},\oplus})
 \leq2\sum_{k=1}^Ns_k(T_{\mu,\phi}).
\end{equation}
All the constants above are optimal uniformly over the class of weights satisfying \eqref{eq:generalized-weight}.
\end{theorem}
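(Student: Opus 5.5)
The plan is to repeat the Gaussian argument verbatim on the orthogonal decomposition $\PH_{\phi,\oplus}^2=\F_\phi^2\oplus\overline{\F_{\phi,\circ}^2}$, taking the holomorphic input from \cite[Theorems~2.7 and~3.2]{IsralowitzVirtanenWolf2015}.

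\textbf{Step 1: (2)$\Leftrightarrow$(3).} This is the holomorphic criterion for $\F_\phi^2$, including independence of $r$. The auxiliary hypothesis $\int e^{-\gamma\abs{z-w}}\,d\mu(w)<\infty$ follows from \eqref{eq:uniform-mass-implies-ivw}, exactly as in Lemma~\ref{lem:holomorphic-criterion}. The required uniform bound on local masses comes from either side.
\begin{itemize}
\item From (3), the same averaging over $B(z,r/2)$ gives it.
\item From (2), test on normalized kernels $k_z$ of $\F_\phi^2$. The standard estimates for generalized Fock weights give $\abs{k_z(w)}^2e^{-2\phi(w)}\geq c_0>0$ for $w\in B(z,r)$. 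Hence $c_0\mu(B(z,r))\leq\inner{T_{\mu,\phi}k_z}{k_z}\leq\norm{T_{\mu,\phi}}$.
\end{itemize}
Alternatively, one can simply cite the boundedness and Carleson characterization in \cite{IsralowitzVirtanenWolf2015}.

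\textbf{Step 2: the diagonal blocks.} Assume (2) and set $A=T_{\mu,\phi}$. For $f=g+\overline h$ we have
\[
\int\abs{f}^2e^{-2\phi}\,d\mu\leq 2\inner{Ag}{g}+2\inner{Ah}{h}\leq 2\norm{A}\norm{f}_{\phi,\oplus}^2.
\]
So the form is absolutely convergent and bounded on $\PH_{\phi,\oplus}^2$. This uses only that the norm is the direct sum norm, not any orthogonality in $L^2(e^{-2\phi})$.
\begin{itemize}
\item The first diagonal block is $A$.
\item The second is $D=JA_0J^{-1}$, where $A_0=\iota_0^*A\iota_0$ is the compression to $\F_{\phi,\circ}^2$ and $J h=\overline h$ is antiunitary onto $\overline{\F_{\phi,\circ}^2}$.
\end{itemize}
The computation is identical to Lemma~\ref{lem:second-diagonal-block}: $q(Jh,Jk)=\overline{\inner{Ah}{k}}=\inner{JA_0h}{Jk}$. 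Therefore $s_k(D)=s_k(A_0)\leq s_k(A)$.

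\textbf{Step 3: comparison.} Apply Proposition~\ref{prop:positive-block} with $m=2$.
\begin{itemize}
\item Using \eqref{eq:block-schatten-small}, \eqref{eq:block-schatten}, \eqref{eq:block-ky-fan} and \eqref{eq:block-symmetric-norm}, we obtain the upper bounds with constants $2$ for $0<p<1$, $2^p$ for $p\geq 1$, and $2$ in every $\mathcal I_\Phi$ and in the Ky Fan sums.
\item Conversely, if the form is bounded on $\PH_{\phi,\oplus}^2$, then the compression to the first summand is $T_{\mu,\phi}$. This gives boundedness of the holomorphic form and $s_k(T_{\mu,\phi})\leq s_k(T^{\mathrm{ph},\oplus}_{\mu,\phi})$, hence the lower bounds.
\end{itemize}

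\textbf{Step 4: optimality, uniformly over the class.} This is the step I expect to need care. The class contains the Gaussian weights $\phi=\abs{z}^2/(2\alpha)$ whenever $c\leq 1/(2\alpha)\cdot$(normalization)$\leq C$. For these weights the direct sum norm coincides with the inherited one by Lemma~\ref{lem:splitting}. Proposition~\ref{prop:sharp-constants} and the point-mass argument in the proof of Corollary~\ref{cor:symmetric-ideals} therefore show that no smaller constants work for any admissible pair $c<C$. I would check the normalization of $dd^c$ so that some Gaussian multiple lies in $[c\omega_0,C\omega_0]$; if $c=C$ is allowed, a Gaussian weight still qualifies. The lower constant $1$ is attained by $\delta_0$ when constants lie in $\F_\phi^2$. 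In the Gaussian case they do, which suffices.
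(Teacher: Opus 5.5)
Your proposal follows the paper's proof essentially step for step: the Isralowitz--Virtanen--Wolf criterion with the exponential integrability hypothesis verified from either side, the diagonal blocks $A$ and $JA_\circ J^{-1}$ with boundedness from $\abs{g+\overline h}^2\leq 2(\abs{g}^2+\abs{h}^2)$ and the direct sum norm, Proposition~\ref{prop:positive-block} with $m=2$, the compression $\iota^*T^{\mathrm{ph},\oplus}_{\mu,\phi}\iota=T_{\mu,\phi}$ for the lower bounds, and Gaussian weights for uniform optimality. One small correction: the near-diagonal kernel lower bound from \cite[Lemma~2.2]{IsralowitzVirtanenWolf2015} only gives $\abs{k_z(w)}^2e^{-2\phi(w)}\geq c_0$ on $B(z,\delta)$ for some small $\delta>0$, not on $B(z,r)$ for arbitrary $r$; however, a uniform bound on $\mu(B(\cdot,\delta))$ is all that \eqref{eq:uniform-mass-implies-ivw} requires, which is exactly how the paper argues.
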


\begin{proof}
The equivalence between the second and third conditions is due to Isralowitz, Virtanen, and Wolf \cite[Theorems~2.7 and~3.2]{IsralowitzVirtanenWolf2015}. Their theorem assumes that
\begin{equation}\label{eq:generalized-exponential-integrability}
 \int_{\C^n}e^{-\gamma\abs{z-w}}\,d\mu(w)<\infty
 \qquad (z\in\C^n,\ \gamma>0).
\end{equation}
This condition is automatic on either side of the equivalence. If the holomorphic form is bounded, testing it on normalized reproducing kernels and using the near-diagonal lower estimate in \cite[Lemma~2.2]{IsralowitzVirtanenWolf2015} gives
\[
\sup_{z\in\C^n}\mu(B(z,\delta))<\infty
\]
for some $\delta>0$. If the local mass function belongs to $L^p$, the elementary ball and lattice comparison gives the same uniform bound. In either case, \eqref{eq:uniform-mass-implies-ivw} verifies \eqref{eq:generalized-exponential-integrability}. Under this integrability condition, the proof of Lemma~\ref{lem:kernel-form-agreement} applies verbatim to $\F_\phi^2$. Thus the Toeplitz operator in \cite{IsralowitzVirtanenWolf2015} (defined using kernels) agrees with the operator representing the holomorphic form.

We compare the two forms. Put $A=T_{\mu,\phi}$, let $P_\circ$ be the orthogonal projection from $\F_\phi^2$ onto $\F_{\phi,\circ}^2$, and set
\[
 A_\circ=P_\circ A|_{\F_{\phi,\circ}^2}.
\]
Conjugation defines an antiunitary map
\[
 J:\F_{\phi,\circ}^2\to\overline{\F_{\phi,\circ}^2},
 \qquad Jh=\overline h,
\]
where the target has the direct sum norm.

Suppose that the holomorphic form is bounded. For $f=g+\overline h\in\PH_{\phi,\oplus}^2$,
\begin{align*}
 q_{\mu,\phi}(f,f)
 &\leq2\int_{\C^n}
 \left(\abs{g(z)}^2+\abs{h(z)}^2\right)e^{-2\phi(z)}\,d\mu(z)\\
 &\leq2\norm{A}
 \left(\norm{g}_\phi^2+\norm{h}_\phi^2\right).
\end{align*}
Thus the full form is bounded and its two diagonal restrictions are represented by
\[
 A \quad\text{and}\quad
 D=JA_\circ J^{-1}.
\]
If $A$ is compact, then
\begin{equation}\label{eq:generalized-second-block}
 s_k(D)=s_k(A_\circ)\leq s_k(A),
 \qquad k\geq1.
\end{equation}
Proposition~\ref{prop:positive-block}, applied with $m=2$, now gives the upper estimate in \eqref{eq:generalized-schatten-comparison}, \eqref{eq:generalized-symmetric-comparison}, and \eqref{eq:generalized-ky-fan}.

Conversely, let
\[
 \iota:\F_\phi^2\to\PH_{\phi,\oplus}^2
\]
be inclusion into the first summand. Whenever the full form is bounded,
\begin{equation}\label{eq:generalized-holomorphic-compression}
 T_{\mu,\phi}
 =\iota^*T_{\mu,\phi}^{\mathrm{ph},\oplus}\iota.
\end{equation}
The ideal inequality for singular values proves the converse implications and the lower estimates.

The Gaussian weights belong to the class \eqref{eq:generalized-weight}. Proposition~\ref{prop:sharp-constants} and the point mass examples used in the proof of Corollary~\ref{cor:symmetric-ideals} therefore prove uniform optimality.
\end{proof}

Let $K_\phi(z,w)$ be the reproducing kernel of $\F_\phi^2$, and let
\begin{equation}\label{eq:generalized-kernel-density}
 \rho_\phi(z)=K_\phi(z,z)e^{-2\phi(z)}.
\end{equation}
The generalized kernel estimates give
\begin{equation}\label{eq:generalized-kernel-density-bounds}
 \rho_\phi(z)\sim1    \qquad (z\in\C^n);
\end{equation}
see \cite[Lemma~2.2]{IsralowitzVirtanenWolf2015}.

\begin{proposition}[Generalized trace identity]
\label{prop:generalized-trace}
Let $\phi$ satisfy \eqref{eq:generalized-weight}, and let $\mu$ be a positive Borel measure. The following statements are equivalent:
\begin{enumerate}
\item $\mu(\C^n)<\infty$;
\item $T_{\mu,\phi}\in\Sp_1$, with boundedness of the holomorphic form;
\item $T_{\mu,\phi}^{\mathrm{ph},\oplus}\in\Sp_1$, with boundedness of
the direct sum form.
\end{enumerate}
Whenever these conditions hold,
\begin{equation}\label{eq:generalized-holomorphic-trace}
 \operatorname{Tr}T_{\mu,\phi}
 =\int_{\C^n}\rho_\phi(z)\,d\mu(z).
\end{equation}
If $1\notin\F_\phi^2$, then
\begin{equation}\label{eq:generalized-ph-trace-no-constant}
 \operatorname{Tr}T_{\mu,\phi}^{\mathrm{ph},\oplus}
 =2\int_{\C^n}\rho_\phi(z)\,d\mu(z).
\end{equation}
If $1\in\F_\phi^2$, then
\begin{equation}\label{eq:generalized-ph-trace-with-constant}
 \operatorname{Tr}T_{\mu,\phi}^{\mathrm{ph},\oplus}
 =2\int_{\C^n}\rho_\phi(z)\,d\mu(z)
 -\frac1{M_\phi}\int_{\C^n}e^{-2\phi(z)}\,d\mu(z).
\end{equation}
where $ M_\phi=\norm{1}_\phi^2  =\int_{\C^n}e^{-2\phi(z)}\dV(z).$
\end{proposition}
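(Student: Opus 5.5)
I would follow the proof of Proposition~\ref{prop:trace-identity}, replacing the monomial basis by an arbitrary orthonormal basis $(e_k)$ of $\F_\phi^2$ and using the positive block principle for the pluriharmonic part.

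\textbf{Step 1 (reduction to the holomorphic trace).} By the ideal inequality, the compression identity \eqref{eq:generalized-holomorphic-compression} shows that (3) implies (2). Conversely, if (2) holds, then Theorem~\ref{thm:generalized-direct-sum} with $p=1$ gives (3). So it suffices to prove that (1) and (2) are equivalent, together with the holomorphic formula.

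\textbf{Step 2 (holomorphic trace).} Assume first that $\mu(\C^n)<\infty$. The pointwise estimate $\abs{g(z)}^2e^{-2\phi(z)}\leq\norm{g}_\phi^2\rho_\phi(z)$ and \eqref{eq:generalized-kernel-density-bounds} bound the form by $C\mu(\C^n)\norm{g}_\phi^2$, so it is bounded. Expanding the reproducing kernel gives
\[
 \sum_k\abs{e_k(z)}^2=K_\phi(z,z).
\]
Tonelli's theorem then yields
\[
 \sum_k\inner{T_{\mu,\phi}e_k}{e_k}=\int\rho_\phi\,d\mu<\infty.
\]
Since $T_{\mu,\phi}$ is positive, it is trace class with this trace, which is \eqref{eq:generalized-holomorphic-trace}. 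Conversely, if $T_{\mu,\phi}\in\Sp_1$, the same Tonelli computation gives $\int\rho_\phi\,d\mu<\infty$. The lower bound $\rho_\phi\gtrsim1$ from \eqref{eq:generalized-kernel-density-bounds} then forces $\mu(\C^n)<\infty$.

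\textbf{Step 3 (pluriharmonic trace).} By \eqref{eq:block-trace} with $m=2$,
\[
 \operatorname{Tr}T_{\mu,\phi}^{\mathrm{ph},\oplus}=\operatorname{Tr}A+\operatorname{Tr}D.
\]
The proof of Theorem~\ref{thm:generalized-direct-sum} identifies the blocks as $A=T_{\mu,\phi}$ and $D=JA_\circ J^{-1}$. Antiunitary conjugation preserves the trace of a positive operator, so $\operatorname{Tr}D=\operatorname{Tr}A_\circ$. There are two cases.
\begin{itemize}
\item If $1\notin\F_\phi^2$, then $\F_{\phi,\circ}^2=\F_\phi^2$ and $A_\circ=A$, which gives \eqref{eq:generalized-ph-trace-no-constant}.
\item If $1\in\F_\phi^2$, choose the orthonormal basis with $e_0=1/\sqrt{M_\phi}$, and let the remaining $e_k$ span $\F_{\phi,\circ}^2$. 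Then
\[
 \operatorname{Tr}A_\circ=\operatorname{Tr}A-\inner{Ae_0}{e_0}=\operatorname{Tr}A-\frac1{M_\phi}\int e^{-2\phi}\,d\mu,
\]
which gives \eqref{eq:generalized-ph-trace-with-constant}.
\end{itemize}

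\textbf{Main obstacle.} Every step is essentially bookkeeping; the only nontrivial input is the two-sided bound \eqref{eq:generalized-kernel-density-bounds} on $\rho_\phi$. I would quote it from \cite[Lemma~2.2]{IsralowitzVirtanenWolf2015} after checking that their normalization of the weight matches $e^{-2\phi}$. The lower bound is what gives the implication (2)$\Rightarrow$(1), and the upper bound is what gives boundedness of the form when $\mu$ is finite.
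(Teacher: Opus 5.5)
Your proof is correct. Step~3 is exactly the paper's argument: the block trace identity, the antiunitary copy $D=JA_\circ J^{-1}$, and removal of the normalized constant when $1\in\F_\phi^2$. The difference lies in how you obtain the equivalence between $\mu(\C^n)<\infty$ and holomorphic trace class membership.

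\textbf{The paper's route.} The paper gets all three equivalences at once from Theorem~\ref{thm:generalized-direct-sum} with $p=1$. Its only extra input is the Tonelli identity $\int_{\C^n}\mu(B(z,r))\dV(z)=V(B(0,r))\mu(\C^n)$. The holomorphic half of that theorem is the Isralowitz--Virtanen--Wolf Schatten characterization. That characterization requires verifying the exponential integrability hypothesis, and it uses considerably more kernel information than a diagonal bound.

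\textbf{Your route.} You derive this equivalence directly from the computation $\sum_k\inner{T_{\mu,\phi}e_k}{e_k}=\int\rho_\phi\,d\mu$ together with the two-sided estimate \eqref{eq:generalized-kernel-density-bounds}. The upper bound on $\rho_\phi$ gives boundedness of the form when $\mu$ is finite. The lower bound turns finiteness of the trace into finiteness of $\mu$.

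\textbf{What each buys.} The passage from holomorphic to direct sum trace class membership in Theorem~\ref{thm:generalized-direct-sum} rests only on the positive block principle. Hence your argument never uses the Isralowitz--Virtanen--Wolf Schatten theorem, only the diagonal kernel estimate. It also yields the quantitative bound $c\,\mu(\C^n)\leq\operatorname{Tr}T_{\mu,\phi}\leq C\mu(\C^n)$ as a by-product. The paper's route is shorter on the page, since the local mass criterion is already available. Logically, however, it is heavier.
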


\begin{proof}
By Tonelli's theorem,
\[
 \int_{\C^n}\mu(B(z,r))\dV(z)
 =V(B(0,r))\mu(\C^n).
\]
The equivalence follows from Theorem~\ref{thm:generalized-direct-sum} with $p=1$.

Let $(e_j)$ be an orthonormal basis of $\F_\phi^2$. Since $K_\phi(z,z)=\sum_j\abs{e_j(z)}^2$, another application of Tonelli's theorem gives
\begin{align*}
 \operatorname{Tr}T_{\mu,\phi}
 &=\sum_j\inner{T_{\mu,\phi}e_j}{e_j}_\phi\\
 &=\int_{\C^n}K_\phi(z,z)e^{-2\phi(z)}\,d\mu(z),
\end{align*}
which is \eqref{eq:generalized-holomorphic-trace}.

If $1\notin\F_\phi^2$, then $A_\circ=A$; while if $1\in\F_\phi^2$, the normalized constant $e_\phi=M_\phi^{-1/2}1$ gives
\begin{align*}
 \operatorname{Tr}A_\circ
 &=\operatorname{Tr}A-\inner{Ae_\phi}{e_\phi}_\phi\\
 &=\operatorname{Tr}A
 -\frac1{M_\phi}\int_{\C^n}e^{-2\phi(z)}\,d\mu(z).
\end{align*}
The second diagonal block has the same trace as $A_\circ$. For the remaining formulas, we note that they are proved by \eqref{eq:block-trace}.
\end{proof}

The orthogonal complement in the definition of $\F_{\phi,\circ}^2$ is needed here. For a nonradial weight it need not equal $\{h\in\F_\phi^2:h(0)=0\}$. If constants do not belong to $\F_\phi^2$, then $\F_{\phi,\circ}^2=\F_\phi^2$ and the correction term in \eqref{eq:generalized-ph-trace-with-constant} is absent.

\subsection{The inherited norm}
\label{sec:inherited-norm}

There is another natural space associated with $\phi$:
\begin{equation}\label{eq:inherited-ph-space}
 \PH_{\phi,L^2}^2
 =\left\{f\in L^2(\C^n,e^{-2\phi}dV):
 f\text{ is pluriharmonic}\right\},
\end{equation}
with the inherited weighted $L^2$ norm. This is a closed subspace, since convergence in the weighted $L^2$ norm implies convergence in $L^2_{\mathrm{loc}}$. The pluriharmonic equations pass to the limit in the sense of distributions, and elliptic regularity gives a pluriharmonic representative.

The synthesis map
\begin{equation}\label{eq:generalized-synthesis-map}
 S_\phi:\PH_{\phi,\oplus}^2\to\PH_{\phi,L^2}^2,
 \qquad S_\phi(g+\overline h)=g+\overline h,
\end{equation}
is bounded and injective but need not have closed range. The next statement isolates the additional property needed to transfer the theorem to the inherited norm.

\begin{proposition}[Transfer to the inherited norm]
\label{prop:synthesis-transfer}
Suppose that $S_\phi$ in \eqref{eq:generalized-synthesis-map} is a Hilbert space isomorphism. Let $T_{\mu,\phi}^{\mathrm{ph},L^2}$ denote the operator associated with \eqref{eq:generalized-toeplitz-form} on $\PH_{\phi,L^2}^2$. Then, for every $0<p<\infty$,
\[
 T_{\mu,\phi}^{\mathrm{ph},L^2}\in\Sp_p
 \quad\Longleftrightarrow\quad
 T_{\mu,\phi}^{\mathrm{ph},\oplus}\in\Sp_p,
\]
with boundedness of the two forms understood. In particular, the three conditions in Theorem~\ref{thm:generalized-direct-sum} remain equivalent when $\PH_{\phi,\oplus}^2$ is replaced by $\PH_{\phi,L^2}^2$.

Whenever these conditions hold,
\begin{equation}\label{eq:inherited-schatten-comparison}
 \norm{T_{\mu,\phi}}_{\Sp_p}^p
 \leq\norm{T_{\mu,\phi}^{\mathrm{ph},L^2}}_{\Sp_p}^p
 \leq2^{\max\{1,p\}}\norm{S_\phi^{-1}}^{2p}
 \norm{T_{\mu,\phi}}_{\Sp_p}^p.
\end{equation}
The analogous symmetric ideal estimate is
\begin{equation}\label{eq:inherited-symmetric-comparison}
 \norm{T_{\mu,\phi}}_\Phi
 \leq\norm{T_{\mu,\phi}^{\mathrm{ph},L^2}}_\Phi
 \leq2\norm{S_\phi^{-1}}^2\norm{T_{\mu,\phi}}_\Phi.
\end{equation}
\end{proposition}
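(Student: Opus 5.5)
The plan is to transport the form $q_{\mu,\phi}$ along the synthesis map. Write $S=S_\phi$, $H_\oplus=\PH_{\phi,\oplus}^2$ and $H_L=\PH_{\phi,L^2}^2$. Since $S$ is the identity on functions, $q_{\mu,\phi}(Sx,Sy)=q_{\mu,\phi}(x,y)$ for $x,y\in H_\oplus$. Hence the form is defined and bounded on $H_L$ if and only if it is defined and bounded on $H_\oplus$, because both spaces consist of the same functions and the norms are equivalent by hypothesis.

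When the form is bounded, let $B_L=T_{\mu,\phi}^{\mathrm{ph},L^2}$ and $B_\oplus=T_{\mu,\phi}^{\mathrm{ph},\oplus}$. Comparing representing operators,
\[
 \inner{B_\oplus x}{y}_{\oplus}=q_{\mu,\phi}(Sx,Sy)=\inner{S^*B_LSx}{y}_{\oplus},
\]
so $B_\oplus=S^*B_LS$ and $B_L=(S^{-1})^*B_\oplus S^{-1}$. The ideal inequality $s_k(XTY)\leq\norm{X}\norm{Y}s_k(T)$ then gives
\[
 s_k(B_\oplus)\leq\norm{S}^2s_k(B_L),
 \qquad
 s_k(B_L)\leq\norm{S^{-1}}^2s_k(B_\oplus).
\]
Compactness transfers in both directions, so membership in $\Sp_p$, or in any $\mathcal I_\Phi$, is equivalent. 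Combined with Theorem~\ref{thm:generalized-direct-sum}, this shows that the three conditions remain equivalent when $H_\oplus$ is replaced by $H_L$.

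For the upper estimates, I would use $s_k(B_L)\leq\norm{S^{-1}}^2s_k(B_\oplus)$ and raise to the power $p$. Summing and applying \eqref{eq:generalized-schatten-comparison} gives
\[
 \norm{B_L}_{\Sp_p}^p\leq\norm{S^{-1}}^{2p}\norm{B_\oplus}_{\Sp_p}^p\leq2^{\max\{1,p\}}\norm{S^{-1}}^{2p}\norm{T_{\mu,\phi}}_{\Sp_p}^p.
\]
The same bound, together with monotonicity of $\Phi$ and \eqref{eq:generalized-symmetric-comparison}, gives the upper half of \eqref{eq:inherited-symmetric-comparison}.

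For the lower estimates, I would not go through $S$, since that would introduce a factor $\norm{S}^2$. Instead, I would compress $B_L$ directly to the holomorphic functions. The space $\F_\phi^2$ embeds isometrically into $H_L$, because the inherited norm of a holomorphic $g$ is exactly $\norm{g}_\phi$. Let $j:\F_\phi^2\to H_L$ be this inclusion. Then $j$ is an isometry and
\[
 \inner{j^*B_Ljg}{k}_\phi=q_{\mu,\phi}(g,k)=\inner{T_{\mu,\phi}g}{k}_\phi,
\]
so $T_{\mu,\phi}=j^*B_Lj$. Therefore $s_k(T_{\mu,\phi})\leq s_k(B_L)$, which yields the left inequalities in \eqref{eq:inherited-schatten-comparison} and \eqref{eq:inherited-symmetric-comparison}.

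The only point needing care is to check that the holomorphic summand carries the same norm in both structures, so that $j$ is a genuine isometry. This holds because $g\mapsto g+\overline0$ has direct-sum norm $\norm{g}_\phi$, which equals its weighted $L^2$ norm. A secondary point is that absolute convergence of the integral on $H_L$ follows from the same property on $H_\oplus$, since the underlying function sets coincide. I do not expect a substantial obstacle.
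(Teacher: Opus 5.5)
Your proposal is correct and follows the paper's proof essentially step for step. The paper's route is the same: the congruences $B_\oplus=S_\phi^*B_LS_\phi$ and $B_L=(S_\phi^{-1})^*B_\oplus S_\phi^{-1}$, combined with Theorem~\ref{thm:generalized-direct-sum}, give the equivalences and the upper bounds, and the lower bounds come from compressing $B_L$ directly to the isometrically embedded holomorphic subspace, which avoids a factor $\norm{S_\phi}^2$.
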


\begin{proof}
We write
\[
 B_\oplus=T_{\mu,\phi}^{\mathrm{ph},\oplus},
 \qquad
 B_{L^2}=T_{\mu,\phi}^{\mathrm{ph},L^2};
\]
these two operators represent the same form in different Hilbert norms, hence
\begin{equation}\label{eq:inherited-congruence}
 B_\oplus=S_\phi^*B_{L^2}S_\phi,
 \qquad
 B_{L^2}=(S_\phi^{-1})^*B_\oplus S_\phi^{-1}.
\end{equation}
Boundedness and membership in any operator ideal are preserved under this bounded invertible congruence. The second identity and Theorem~\ref{thm:generalized-direct-sum} give the upper estimates in \eqref{eq:inherited-schatten-comparison} and \eqref{eq:inherited-symmetric-comparison}. For the two lower estimates, we note that the holomorphic space is an isometric closed subspace of \eqref{eq:inherited-ph-space}, and its compression is $T_{\mu,\phi}$.
\end{proof}

We next give a class for which the hypothesis of Proposition~\ref{prop:synthesis-transfer} can be checked directly.

\begin{proposition}[Scalar circle invariance]
\label{prop:circle-invariant-weight}
Suppose that $\phi$ satisfies \eqref{eq:generalized-weight} and
\begin{equation}\label{eq:circle-invariant-weight}
 \phi(e^{i\theta}z)=\phi(z),
 \qquad z\in\C^n,\quad \theta\in\mathbb R.
\end{equation}
Then $S_\phi$ is unitary. As a result, $\PH_{\phi,\oplus}^2=\PH_{\phi,L^2}^2$ with equality of norms, and all the conclusions and constants in Theorem~\ref{thm:generalized-direct-sum} hold for the inherited space.
\end{proposition}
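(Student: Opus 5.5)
The plan is to imitate the proof of Lemma~\ref{lem:splitting}, with the Gaussian weight replaced by a general circle-invariant weight, and to use the circle action instead of polar coordinates. The starting point is the key identity
\[
 \int_{\C^n}g(z)h(z)e^{-2\phi(z)}\dV(z)=0
 \qquad (g\in\F_\phi^2,\ h\in\F_{\phi,\circ}^2).
\]
To obtain it, I would first show that the rotations $U_\theta g(z)=g(e^{i\theta}z)$ act unitarily on $\F_\phi^2$. This follows from \eqref{eq:circle-invariant-weight} and the rotation invariance of $dV$. Consequently the closed subspaces of homogeneous polynomials of each degree are mutually orthogonal: for homogeneous $P,Q$ of degrees $j\neq k$, invariance gives $\inner{P}{Q}_\phi=e^{i(j-k)\theta}\inner{P}{Q}_\phi$. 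Expanding $g$ and $h$ in homogeneous components, with convergence in $\F_\phi^2$ obtained via the Fourier projections $\frac1{2\pi}\int_0^{2\pi}e^{-ik\theta}U_\theta\,d\theta$, it follows that $\mathcal C_\phi$ is exactly the span of constants when $1\in\F_\phi^2$. In that case orthogonality to constants is equivalent to $h(0)=0$, and so $\F_{\phi,\circ}^2=\{h\in\F_\phi^2: h(0)=0\}$. If $1\notin\F_\phi^2$, I would note that the degree-zero Fourier projection of $h$ is the constant $h(0)$, which lies in $\F_\phi^2$; hence $h(0)=0$ automatically.

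Next comes the vanishing of the pairing. For entire $gh$ with $(gh)(0)=0$, averaging over the circle gives $\frac1{2\pi}\int_0^{2\pi}(gh)(e^{i\theta}z)\,d\theta=(gh)(0)=0$ for every $z$, by the one-variable mean value property applied to $\lambda\mapsto (gh)(\lambda z)$. Integrating over a circle-invariant exhausting set, such as the balls $B(0,R)$, and using circle invariance of $e^{-2\phi}dV$ together with Fubini, I would get $\int_{B(0,R)}gh\,e^{-2\phi}\dV=0$. Fubini is justified on the bounded ball by continuity. Since $gh\,e^{-2\phi}\in L^1$ by Cauchy--Schwarz, letting $R\to\infty$ yields the identity.

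With the identity in hand, the same expansion of $\abs{g+\overline h}^2$ used in Lemma~\ref{lem:splitting} gives $\norm{g+\overline h}_{L^2(e^{-2\phi})}^2=\norm{g}_\phi^2+\norm{h}_\phi^2$, so $S_\phi$ is an isometry. For surjectivity, take $f\in\PH_{\phi,L^2}^2$ and write $f=g+\overline h$ with $g,h$ entire and $h(0)=0$, as in the first part of Lemma~\ref{lem:splitting}. I then have to show that $g,h\in\F_\phi^2$. On $B(0,R)$ the cross terms still integrate to zero by the circle-average argument, which needs only entireness. Hence $\int_{B(0,R)}(\abs g^2+\abs h^2)e^{-2\phi}=\int_{B(0,R)}\abs f^2e^{-2\phi}\leq\norm f^2$, and monotone convergence gives $g,h\in\F_\phi^2$. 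Finally $h(0)=0$ places $h$ in $\F_{\phi,\circ}^2$ by the previous step. Thus $S_\phi$ is unitary, and Proposition~\ref{prop:synthesis-transfer} with $\norm{S_\phi^{-1}}=1$ transfers all the conclusions and constants; the congruence \eqref{eq:inherited-congruence} is then a unitary equivalence.

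The main obstacle is identifying $\F_{\phi,\circ}^2$ with the functions vanishing at the origin for a nonradial weight. This is exactly where circle invariance enters, and it is needed to make the pairing identity applicable. The Fourier projection argument above handles it, but it requires care to check that the degree-zero projection of $h$ is indeed the constant $h(0)$. I would verify this pointwise: evaluation is continuous on $\F_\phi^2$, so evaluating the Bochner integral at $z$ gives $\frac1{2\pi}\int_0^{2\pi}h(e^{i\theta}z)\,d\theta=h(0)$.
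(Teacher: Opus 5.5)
Your proposal is correct and follows essentially the paper's argument. The common core is the same: the circle-averaged mean value identity on the balls $B(0,R)$ kills the cross terms $gh+\overline{gh}$, monotone convergence then gives $g,h\in\F_\phi^2$ with $\norm{f}_\phi^2=\norm{g}_\phi^2+\norm{h}_\phi^2$, and one finishes by checking $h\perp\mathcal C_\phi$. The one difference is in identifying $\F_{\phi,\circ}^2$ with the functions vanishing at the origin: the paper does not use the rotation group or Fourier projections, but applies the same ball identity to $v=h$ (which is integrable against $e^{-2\phi}$ by Cauchy--Schwarz when $1\in\F_\phi^2$) to get $\inner{h}{1}_\phi=0$ directly, and it obtains the isometry on all of $\PH_{\phi,\oplus}^2$ from uniqueness of the direct sum representation rather than from a separate global pairing identity.
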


\begin{proof}
For every entire function $v$ and every $z\in\C^n$, the mean value formula applied to the entire function $\lambda\mapsto v(\lambda z)$ gives
\[
 \frac1{2\pi}\int_0^{2\pi}v(e^{i\theta}z)\,d\theta=v(0).
\]
Both $B(0,R)$ and $e^{-2\phi}dV$ are invariant under $z\mapsto e^{i\theta}z$; thus we average over $\theta$ and changing variables, 
\begin{equation}\label{eq:circle-ball-mean}
 \int_{B(0,R)}v(z)e^{-2\phi(z)}\dV(z)
 =v(0)\int_{B(0,R)}e^{-2\phi(z)}\dV(z),
 \qquad R>0.
\end{equation}
All integrals here are over bounded sets, so this identity does not require $v\in\F_\phi^2$.

Let $f\in\PH_{\phi,L^2}^2$; the decomposition argument at the beginning of the proof of Lemma~\ref{lem:splitting} gives entire functions $g$ and $h$ such that
\[
 f=g+\overline h,
 \qquad h(0)=0.
\]
Applying the identity \eqref{eq:circle-ball-mean} to $v=gh$, and noting $(gh)(0)=0$, we obtain
\[
 \int_{B(0,R)}g(z)h(z)e^{-2\phi(z)}\dV(z)=0.
\]
Expanding $\abs{g+\overline h}^2$ now yields
\[
 \int_{B(0,R)}\abs{f(z)}^2e^{-2\phi(z)}\dV(z)
 =\int_{B(0,R)} \bigl(\abs{g(z)}^2+\abs{h(z)}^2\bigr)e^{-2\phi(z)}\dV(z).
\]
Letting $R\to\infty$ and applying monotone convergence, we find
\begin{equation}\label{eq:circle-part-norms}
 \norm{g}_\phi^2+\norm{h}_\phi^2
 =\norm{f}_\phi^2<\infty.
\end{equation}
In particular, $g,h\in\F_\phi^2$.

We next check that $h\in\F_{\phi,\circ}^2$: if $\mathcal C_\phi=\{0\}$, this follows from $h\in\F_\phi^2$; otherwise $1\in\F_\phi^2$, and Cauchy--Schwarz gives
\[
 \int_{\C^n}\abs{h(z)}e^{-2\phi(z)}\dV(z)
 \leq\norm{h}_\phi\norm{1}_\phi<\infty.
\]
Using \eqref{eq:circle-ball-mean} with $v=h$ and then letting $R\to\infty$, we obtain
\[
 \inner{h}{1}_\phi
 =\int_{\C^n}h(z)e^{-2\phi(z)}\dV(z)=0.
\]
Thus $h\perp\mathcal C_\phi$ in either case.

We have proved that every $f\in\PH_{\phi,L^2}^2$ has a representation in $\PH_{\phi,\oplus}^2$, so $S_\phi$ is surjective. The representation in \eqref{eq:generalized-direct-sum} is unique, and hence \eqref{eq:circle-part-norms} says that
\[
 \norm{S_\phi f}_\phi=\norm{f}_{\phi,\oplus}
 \qquad (f\in\PH_{\phi,\oplus}^2).
\]
Therefore $S_\phi$ is unitary. The two Hilbert spaces and their Toeplitz forms agree, so Theorem~\ref{thm:generalized-direct-sum} gives the stated conclusions with the same constants.
\end{proof}

\begin{corollary}[Comparable circle-invariant weights]
\label{cor:comparable-circle-weight}
Suppose that $\phi$ satisfies \eqref{eq:generalized-weight}. Assume that there is another weight $\psi$ satisfying \eqref{eq:generalized-weight} and \eqref{eq:circle-invariant-weight}, and constants $0<m\leq M<\infty$, such that
\begin{equation}\label{eq:comparable-generalized-weights}
 m e^{-2\psi(z)}
 \leq e^{-2\phi(z)}
 \leq M e^{-2\psi(z)},
 \qquad z\in\C^n.
\end{equation}
Then $S_\phi$ is a Hilbert space isomorphism. The local mass Schatten criterion therefore holds on $\PH_{\phi,L^2}^2$ for every $0<p<\infty$.
\end{corollary}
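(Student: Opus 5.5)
We will reduce the statement to Proposition~\ref{prop:circle-invariant-weight} by a comparison of norms. The plan is to show that the synthesis map $S_\phi$ is a Hilbert space isomorphism and then to invoke Proposition~\ref{prop:synthesis-transfer}.

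The first step records the effect of \eqref{eq:comparable-generalized-weights} on the underlying spaces. As sets, $L^2(\C^n,e^{-2\phi}dV)=L^2(\C^n,e^{-2\psi}dV)$, with
\[
 m\norm{f}_\psi^2\leq\norm{f}_\phi^2\leq M\norm{f}_\psi^2 .
\]
Consequently $\F_\phi^2=\F_\psi^2$ as sets, with the same two-sided comparison, and $\PH_{\phi,L^2}^2=\PH_{\psi,L^2}^2$ as sets, with equivalent inherited norms. In particular $1\in\F_\phi^2$ if and only if $1\in\F_\psi^2$.

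By Proposition~\ref{prop:circle-invariant-weight} applied to $\psi$, every $f\in\PH_{\phi,L^2}^2$ can be written as $f=g+\overline h$ with $g,h\in\F_\psi^2$, $h(0)=0$, and
\[
 \norm{g}_\psi^2+\norm{h}_\psi^2=\norm{f}_\psi^2 .
\]
Here $h$ is orthogonal to constants in the $\psi$ inner product, and for circle-invariant $\psi$ that orthogonality is the same as $h(0)=0$. Combining this with the comparison gives
\[
 \norm{g}_\phi^2+\norm{h}_\phi^2\leq\frac{M}{m}\norm{f}_\phi^2 .
\]

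The main obstacle is normalization. The direct sum $\PH_{\phi,\oplus}^2$ requires $h\in\F_{\phi,\circ}^2$, meaning $h$ orthogonal to constants in the $\phi$ inner product, and this need not coincide with $h(0)=0$. If $1\notin\F_\phi^2$ there is nothing to do. Otherwise set $c=\inner{h}{1}_\phi/M_\phi$ and replace $(g,h)$ by $(g+\overline c,\,h-c)$. Cauchy--Schwarz gives $\abs{c}\,\norm{1}_\phi\leq\norm{h}_\phi$, so the new components satisfy
\[
 \norm{g+\overline c}_\phi^2+\norm{h-c}_\phi^2\leq C\bigl(\norm{g}_\phi^2+\norm{h}_\phi^2\bigr)
\]
for an absolute constant $C$. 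Hence $S_\phi$ is surjective with
\[
 \norm{S_\phi^{-1}f}_{\phi,\oplus}^2\leq C\frac{M}{m}\norm{f}_\phi^2 .
\]
Since $S_\phi$ is bounded and injective by the remarks preceding Proposition~\ref{prop:synthesis-transfer}, it is a bounded bijection with bounded inverse. (The open mapping theorem would also yield boundedness of the inverse.)

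Proposition~\ref{prop:synthesis-transfer} then transfers the equivalences of Theorem~\ref{thm:generalized-direct-sum}, and in particular the local mass criterion, to $\PH_{\phi,L^2}^2$ for every $0<p<\infty$.
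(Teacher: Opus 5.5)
Your proposal is correct and follows the paper's proof essentially step for step. You transfer the norm-preserving decomposition from the circle-invariant weight $\psi$ through the comparable norms, renormalize $h$ by subtracting its $\phi$-orthogonal projection onto the constants, and then invoke Proposition~\ref{prop:synthesis-transfer}; your explicit bound $\norm{S_\phi^{-1}}^2\leq CM/m$ (with $C=3$) makes the paper's appeal to the open mapping theorem unnecessary.
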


\begin{proof}
The two weighted $L^2$ spaces have the same elements and equivalent norms. The same is true of their holomorphic and inherited pluriharmonic subspaces. In particular, $\mathcal C_\phi=\mathcal C_\psi$. Proposition~\ref{prop:circle-invariant-weight} gives a bounded decomposition $f=g+\overline h$ in the $\psi$ norm, with $h\perp_\psi\mathcal C_\psi$. We let $P_{\phi,\mathcal C}$ be the $\phi$ orthogonal projection onto $\mathcal C_\phi$, and we put
\[
 c=P_{\phi,\mathcal C}h,
 \qquad \widetilde h=h-c,
 \qquad \widetilde g=g+\overline c;
\]
then $f=\widetilde g+\overline{\widetilde h}$ and $\widetilde h\in\F_{\phi,\circ}^2$. Equivalent norms and boundedness of this finite-dimensional projection control $\widetilde g$ and $\widetilde h$ by $f$, which shows that $S_\phi$ is surjective. Its inverse is bounded by the open mapping theorem, and Proposition~\ref{prop:synthesis-transfer} applies.
\end{proof}

Condition \eqref{eq:circle-invariant-weight} is scalar circle invariance (rather than radiality); if $n\geq2$, every positive definite Hermitian form
\[
 \psi(z)=z^*Az
\]
has this invariance, and it is nonradial unless $A$ is a scalar matrix. Corollary~\ref{cor:comparable-circle-weight} also allows an arbitrary bounded nonradial perturbation of such a weight, provided the perturbed weight continues to satisfy \eqref{eq:generalized-weight}.

\subsection{Failure of the unrestricted inherited extension}

We finish by showing that the extra hypothesis above cannot be omitted. We perform this construction in one complex dimension and we use the invariance of the holomorphic theory under the addition of the real part of an entire function.

\begin{lemma}[Phase patching]\label{lem:phase-patching}
There are real numbers $a_j>0$ with $\abs{a_j-a_k}>4$ for $j\ne k$, an entire function $q$, and compact sets $K_j\subset\C$ such that, with
\[
 \varepsilon_j=2^{-5j},
 \qquad
 d\gamma_{a_j}(z)=\frac1\pi e^{-\abs{z-a_j}^2}\dV(z),
\]
the following properties hold:
\begin{align}
 \gamma_{a_j}(\C\setminus K_j)&<\varepsilon_j^2,
 \label{eq:patch-large-set}\\
 \sup_{z\in K_j}
 \abs{q(z)+a_jz-\frac{i\pi}{2}}&<\varepsilon_j,
 \label{eq:patch-phase}\\
 \abs{q(a_j)+a_j^2}&<\varepsilon_j.
 \label{eq:patch-center}
\end{align}
Each $K_j$ may be taken to be a finite union of pairwise disjoint closed disks.
\end{lemma}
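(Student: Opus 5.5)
Since $\gamma_{a_j}$ is a probability measure concentrated near $a_j$, we first fix the geometry and then build $q$ by Runge approximation on a disjoint union of compact sets. Choose $a_j>0$ increasing fast, say $a_j=10j$ or faster, so that $\abs{a_j-a_k}>4$. For each $j$, pick a radius $\rho_j>0$ so large that $\gamma_{a_j}(\C\setminus\overline{D}(a_j,\rho_j))=e^{-\rho_j^2}<\varepsilon_j^2$; this needs only $\rho_j^2>10j\log 2$. The disks $\overline D(a_j,\rho_j)$ need not be disjoint for different $j$ when $\rho_j\sim\sqrt j$ grows while the spacing is fixed. We therefore take $a_j$ growing fast enough, for example $a_{j+1}>a_j+\rho_j+\rho_{j+1}+4$, so that the disks are pairwise disjoint and their union $E=\bigcup_j\overline D(a_j,\rho_j)$ is a closed set going to infinity along the positive real axis. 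We then set $K_j=\overline D(a_j,\rho_j)$, a single disk, which is a finite union of disjoint closed disks. This gives \eqref{eq:patch-large-set}.

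On $K_j$ the target holomorphic function is $F_j(z)=-a_jz+\tfrac{i\pi}{2}$. Condition \eqref{eq:patch-phase} asks that $q$ be uniformly within $\varepsilon_j$ of $F_j$ on $K_j$. Condition \eqref{eq:patch-center} asks $\abs{q(a_j)+a_j^2}<\varepsilon_j$; since $F_j(a_j)=-a_j^2+i\pi/2$, these two requirements are incompatible, because they demand $q(a_j)$ near both $-a_j^2+i\pi/2$ and $-a_j^2$. So the center must lie \emph{outside} $K_j$. I would therefore take $K_j$ to be a large disk around $a_j$ with a small disk around $a_j$ removed. That set is not a finite union of disjoint closed disks, however. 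The remedy is to cover most of the $\gamma_{a_j}$-mass by finitely many small disjoint closed disks avoiding a tiny neighbourhood $D(a_j,\eta_j)$ of $a_j$, and to add a separate small closed disk $L_j=\overline D(a_j,\eta_j/2)$ on which the target is $G_j(z)=-a_jz$. The mass of $D(a_j,\eta_j)$ under $\gamma_{a_j}$ is at most $\eta_j^2$. A disjoint finite family of closed disks can cover all of the annulus $\eta_j\le\abs{z-a_j}\le\rho_j$ except a set of arbitrarily small Lebesgue measure, for instance by a Vitali-type packing. Hence $K_j$ can be chosen with $\gamma_{a_j}(\C\setminus K_j)<\varepsilon_j^2$.

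Let $E=\bigcup_j(K_j\cup L_j)$, a closed set whose components are disks. Define a function $F$ on a neighbourhood of $E$ by $F=-a_jz+i\pi/2$ near $K_j$ and $F=-a_jz$ near $L_j$. The set $\C\setminus E$ is connected and locally connected at infinity, since the disks are disjoint, locally finite, and each of their complements is connected. By Arakelian's theorem, or more elementarily by the classical Runge-type theorem for Carleman-type uniform approximation on a locally finite union of disjoint closed disks, there is an entire $q$ with $\abs{q-F}<\varepsilon_j$ on $K_j\cup L_j$ for every $j$. Evaluating at $a_j\in L_j$ gives $\abs{q(a_j)+a_j^2}<\varepsilon_j$, and on $K_j$ we get \eqref{eq:patch-phase}.

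The main obstacle is justifying approximation with errors $\varepsilon_j$ varying with $j$ on an infinite union of disks. I would prove it directly by a Mittag-Leffler-style inductive Runge argument. Order the disk-clusters by distance from the origin. At step $j$, use Runge's theorem on the compact set $\overline D(0,R_j)\cup K_j\cup L_j$, where $\overline D(0,R_j)$ contains all earlier clusters but is disjoint from cluster $j$; this compact set has connected complement. Approximate the function equal to $q_{j-1}$ on the big disk and to $F$ on cluster $j$ by a polynomial $q_j$, with error $2^{-j}\min_{k\le j}\varepsilon_k/2$. The limit $q=\lim q_j$ is entire, and the telescoping errors keep each cluster within $\varepsilon_k$.
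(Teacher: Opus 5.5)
Your final construction is correct and is essentially the paper's argument. It uses finitely many disjoint closed disks carrying most of the Gaussian mass while avoiding a small disk at $a_j$, a separate target on that central disk (your $-a_jz$ versus the paper's constant $-a_j^2$ is immaterial, since only the value at $a_j$ is used), and an inductive Runge step that keeps the previous polynomial on a large disk so the errors telescope. Two asides are unneeded: the appeal to Arakelian's theorem adds nothing once you run the inductive step, and plain Runge suffices there because the targets are holomorphic near the compact sets (the paper cites Mergelyan).
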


\begin{proof}
Let $d\gamma_0(z)=\pi^{-1}e^{-\abs z^2}\dV(z)$. For each $j$, the Vitali covering theorem and inner regularity give a finite union $E_j$ of pairwise disjoint closed disks such that
\begin{equation}\label{eq:patch-model-set}
 0\notin E_j,
 \qquad
 \gamma_0(\C\setminus E_j)<\varepsilon_j^2.
\end{equation}
One direct construction is as follows: we first remove a disk about the origin and the complement of a sufficiently large disk, with total $\gamma_0$ measure less than $\varepsilon_j^2/3$. We apply the Vitali theorem in the remaining annulus and choose finitely many disjoint disks whose union misses measure less than $\varepsilon_j^2/3$. Shrinking these disks slightly makes their closures disjoint and loses less than $\varepsilon_j^2/3$ of the measure, which proves \eqref{eq:patch-model-set}.

We choose $\rho_j>0$ so small that $\overline{B(0,\rho_j)}$ is disjoint from $E_j$, and put
\[
 \sigma_j=2^{-10j}.
\]
Then
\begin{equation}\label{eq:patch-errors}
 \sum_{k=j}^\infty\sigma_k<\varepsilon_j.
\end{equation}

We construct polynomials $q_j$ inductively, beginning with $q_0=0$ and a choice of some $R_0>0$; suppose that $q_{j-1}$ and $R_{j-1}$ have been chosen. Since $E_j$ is bounded, we may choose $a_j>0$ so large that
\[
 a_j+E_j,\qquad \overline{B(a_j,\rho_j)},\qquad
 \overline{B(0,R_{j-1})}
\]
are pairwise disjoint and $\abs{a_j-a_k}>4$ for $k<j$. The compact set
\[
 X_j=\overline{B(0,R_{j-1})}
 \cup(a_j+E_j)\cup\overline{B(a_j,\rho_j)}
\]
is a finite union of pairwise disjoint closed disks, so its complement is connected. On a neighborhood of $X_j$, we define
\[
 Q_j(z)=
 \begin{cases}
 q_{j-1}(z),&z\in\overline{B(0,R_{j-1})},\\
 -a_jz+i\pi/2,&z\in a_j+E_j,\\
 -a_j^2,&z\in\overline{B(a_j,\rho_j)}.
 \end{cases}
\]
By Mergelyan's theorem, we have a polynomial $q_j$ such that
\begin{equation}\label{eq:patch-runge-step}
 \sup_{X_j}\abs{q_j-Q_j}<\sigma_j.
\end{equation}
We then choose $R_j>R_{j-1}$ so that $X_j\subset B(0,R_j)$.

If $k>j$, then
\[
 \sup_{B(0,R_{k-1})}\abs{q_k-q_{k-1}}<\sigma_k.
\]
Thus $(q_j)$ converges uniformly on compact subsets to an entire function $q$. Set $K_j=a_j+E_j$. By translation invariance of the Gaussian,
\[
 \gamma_{a_j}(\C\setminus K_j)
 =\gamma_0(\C\setminus E_j)<\varepsilon_j^2.
\]
Equation \eqref{eq:patch-runge-step} and a telescoping sum now give
\begin{align*}
 \sup_{z\in K_j}\abs{q(z)+a_jz-\frac{i\pi}{2}}
 &\leq\sum_{k=j}^\infty\sigma_k<\varepsilon_j,\\
 \abs{q(a_j)+a_j^2}
 &\leq\sum_{k=j}^\infty\sigma_k<\varepsilon_j.
\end{align*}
\end{proof}

\begin{proposition}[Failure of the inherited norm criterion]
\label{prop:inherited-generalized-failure}
There exist a real-valued function $\phi\in C^\infty(\C)$ satisfying
\begin{equation}\label{eq:failure-levi}
 dd^c\phi=\frac12dd^c\abs{z}^2
\end{equation}
and a finite positive discrete measure $\mu$ on $\C$ with the following properties:
\begin{enumerate}
\item For every $0<p<\infty$ and every $r>0$, the function $z\mapsto\mu(B(z,r))$ belongs to $L^p(\C,dV)$.
\item The holomorphic form is bounded on $\F_\phi^2$, and $T_{\mu,\phi}\in\Sp_p$ for every $0<p<\infty$.
\item The form \eqref{eq:generalized-toeplitz-form} is unbounded on $\PH_{\phi,L^2}^2$.
\end{enumerate}
\end{proposition}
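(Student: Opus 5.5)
We take $\phi(z)=\abs{z}^2/2+\operatorname{Re}q(z)$, with $q$, $a_j$, $K_j$, $\varepsilon_j=2^{-5j}$ from Lemma~\ref{lem:phase-patching}, and the measure
\[
 \mu=\sum_{j\geq1}c_j\delta_{a_j},\qquad c_j=2^{-j}.
\]
Since $\operatorname{Re}q$ is pluriharmonic, $dd^c\phi=\frac12dd^c\abs z^2$ and $\phi\in C^\infty$. Clearly $\mu$ is finite and discrete. The proof has three steps, one for each listed property, and the third is the only one that uses the patching lemma.

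\emph{Property (1).} For $0<p\leq1$ we use subadditivity of $t\mapsto t^p$:
\[
 \int\mu(B(z,r))^p\dV(z)\leq\int\Bigl(\sum_jc_j^p\mathbf 1_{B(a_j,r)}(z)\Bigr)\dV(z)=\pi r^2\sum_j2^{-jp}<\infty.
\]
For $p>1$ we use the bound $\mu(B(z,r))^p\leq\mu(\C)^{p-1}\mu(B(z,r))$ and the case $p=1$.

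\emph{Property (2).} We use the identity
\[
 \abs{g}^2e^{-2\phi}=\abs{ge^{-q}}^2e^{-\abs z^2}.
\]
It shows that $U:g\mapsto ge^{-q}$ maps $\F^2_\phi$ unitarily, up to the constant $\pi$, onto the Gaussian space $\F^2_1$. It also shows that $U$ intertwines the two Toeplitz forms for the same $\mu$. Hence $T_{\mu,\phi}$ is unitarily equivalent to $T_\mu$ on $\F^2_1$, and Lemma~\ref{lem:holomorphic-criterion} together with property (1) gives $T_{\mu,\phi}\in\Sp_p$ for all $p$.

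\emph{Property (3).} This is the heart of the proof. Put $k_a(z)=e^{az-a^2/2}$ for real $a$, and
\[
 G_j=e^{q}k_{a_j},\qquad f_j=G_j+\overline{G_j}\in\PH^2_{\phi,L^2}.
\]
Then $\norm{G_j}_\phi^2=\pi$, because $\abs{G_j}^2e^{-2\phi}=e^{-\abs{z-a_j}^2}$. Expanding $\abs{f_j}^2=2\abs{G_j}^2+2\operatorname{Re}G_j^2$, and using $G_j^2e^{-2\phi}=e^{2i\operatorname{Im}(q(z)+a_jz)}e^{-\abs{z-a_j}^2}$, we get
\[
 \norm{f_j}_\phi^2=2\pi+2\pi\operatorname{Re}\int_\C e^{2i\operatorname{Im}(q+a_jz)}\,d\gamma_{a_j}.
\]
On $K_j$, \eqref{eq:patch-phase} gives $\abs{e^{2i\operatorname{Im}(q+a_jz)}+1}\leq2\varepsilon_j$. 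Off $K_j$, the integrand has modulus one and $\gamma_{a_j}(\C\setminus K_j)<\varepsilon_j^2$ by \eqref{eq:patch-large-set}. Hence the integral equals $-1+O(\varepsilon_j)$, and
\[
 \norm{f_j}_\phi^2\leq C\varepsilon_j.
\]
This cancellation is exactly what the phase patching was designed to produce, and controlling its error is the main technical point.

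On the other hand, \eqref{eq:patch-center} gives $G_j(a_j)=e^{q(a_j)+a_j^2/2}=e^{-a_j^2/2}(1+O(\varepsilon_j))$. It follows that $f_j(a_j)=2e^{-a_j^2/2}(1+O(\varepsilon_j))$ and $e^{-2\phi(a_j)}=e^{a_j^2}(1+O(\varepsilon_j))$. Therefore
\[
 q_{\mu,\phi}(f_j,f_j)\geq c_j\abs{f_j(a_j)}^2e^{-2\phi(a_j)}\geq 3c_j
\]
for large $j$. The ratio satisfies
\[
 \frac{q_{\mu,\phi}(f_j,f_j)}{\norm{f_j}_\phi^2}\gtrsim\frac{2^{-j}}{2^{-5j}}=2^{4j}\to\infty,
\]
so the form is unbounded on $\PH^2_{\phi,L^2}$.
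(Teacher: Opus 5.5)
Your proposal is correct and follows the paper's proof essentially step for step. It uses the same weight, the same measure $\sum_j 2^{-j}\delta_{a_j}$, the same test functions $e^{q}k_{a_j}+\overline{e^{q}k_{a_j}}$, and the same gauge unitary $g\mapsto e^{-q}g$. The only differences are cosmetic. You bound the oscillatory term by its modulus and get $\norm{f_j}_\phi^2\lesssim\varepsilon_j$, whereas the paper's $\cos^2\Theta_{a_j}\leq\varepsilon_j^2$ gives $8\varepsilon_j^2$. You also treat $p>1$ in property (1) through $\mu(\C)<\infty$ rather than through the separation of the $a_j$; neither change affects the conclusion.
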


\begin{proof}
Let $q$ and $(a_j)$ be supplied by Lemma~\ref{lem:phase-patching}, and set
\begin{equation}\label{eq:failure-weight}
 \phi(z)=\frac{\abs z^2}{2}+\operatorname{Re}q(z).
\end{equation}
Since $\operatorname{Re}q$ is harmonic, \eqref{eq:failure-levi} follows and thus $\phi$ satisfies \eqref{eq:generalized-weight} with the same Levi form as the standard Gaussian weight.

For $a\in\mathbb R$, let
\[
 k_a(z)=\frac1{\sqrt\pi}\exp\left(az-\frac{a^2}{2}\right),
 \qquad
 g_a=e^qk_a,
 \qquad
 f_a=g_a+\overline{g_a}.
\]
The function $k_a$ is a normalized reproducing kernel for the classical Fock space with weight $e^{-\abs z^2}$ and
\begin{equation}\label{eq:failure-gauge-unitary}
 \mathcal U:\F_\phi^2\to\F_{\abs{z}^2/2}^2,
 \qquad \mathcal Ug=e^{-q}g,
\end{equation}
is unitary, so we conclude that $g_a\in\F_\phi^2$.

With $z=x+iy$ and
\[
 \Theta_a(z):=\operatorname{Im}q(z)+ay,
\]
we may compute using \eqref{eq:failure-weight} to see that
\begin{equation}\label{eq:failure-norm-identity}
 \norm{f_a}_\phi^2
 =4\int_\C\cos^2\Theta_a(z)\,d\gamma_a(z).
\end{equation}
At the real point $a$, the same calculation gives
\begin{equation}\label{eq:failure-value-identity}
 e^{-\phi(a)}\abs{f_a(a)}
 =\frac2{\sqrt\pi}\abs{\cos(\operatorname{Im}q(a))}.
\end{equation}

For ease of notation, let $f_j=f_{a_j}$. On $K_j$, equation \eqref{eq:patch-phase} gives
\[
 \abs{\Theta_{a_j}(z)-\pi/2}<\varepsilon_j,
 \qquad
 \cos^2\Theta_{a_j}(z)\leq\varepsilon_j^2.
\]
On the complement we use $\cos^2\Theta_{a_j}\leq1$; equations \eqref{eq:patch-large-set} and \eqref{eq:failure-norm-identity} then yield
\begin{equation}\label{eq:failure-small-norm}
 \norm{f_j}_\phi^2\leq8\varepsilon_j^2.
\end{equation}
while equation \eqref{eq:patch-center} implies $\abs{\operatorname{Im}q(a_j)}<\varepsilon_j<1$, so that \eqref{eq:failure-value-identity} gives
\begin{equation}\label{eq:failure-large-value}
 e^{-2\phi(a_j)}\abs{f_j(a_j)}^2\geq\frac1\pi.
\end{equation}

We now define
\begin{equation}\label{eq:failure-measure}
 \mu:=\sum_{j=1}^\infty2^{-j}\delta_{a_j}.
\end{equation}
The points $a_j$ are uniformly separated. For every fixed $r>0$, there is $N_r<\infty$ such that each ball of radius $r$ contains at most $N_r$ of these points. Hence, for every $p>0$,
\begin{align*}
 \int_\C\mu(B(z,r))^p\dV(z)
 &\leq N_r^{(p-1)_+}\sum_{j=1}^\infty2^{-jp}
 \int_\C\mathbf1_{B(a_j,r)}(z)\dV(z)\\
 &=N_r^{(p-1)_+}V(B(0,r))\sum_{j=1}^\infty2^{-jp}<\infty,
\end{align*}
which proves the first assertion.

The unitary map \eqref{eq:failure-gauge-unitary} also intertwines the holomorphic Toeplitz forms:
\[
 \int_\C g\overline h e^{-2\phi}\,d\mu
 =\int_\C(\mathcal Ug)\overline{(\mathcal Uh)}e^{-\abs z^2}\,d\mu.
\]
The classical local mass theorem therefore gives $T_{\mu,\phi}\in\Sp_p$ for every $p>0$.

Finally, let $F_j=f_j/\norm{f_j}_\phi$. Equations \eqref{eq:failure-small-norm} and \eqref{eq:failure-large-value} give
\begin{align*}
 \int_\C\abs{F_j(z)}^2e^{-2\phi(z)}\,d\mu(z)
 &\geq2^{-j}e^{-2\phi(a_j)}\abs{F_j(a_j)}^2\\
 &\geq\frac{2^{-j}}{8\pi\varepsilon_j^2}
 =\frac{2^{9j}}{8\pi}\to\infty.
\end{align*}
Since $\norm{F_j}_\phi=1$, the inherited form is unbounded.
\end{proof}

Equation \eqref{eq:patch-center} and the classical Fock pointwise estimate show that $e^{-q}\notin\F_{\abs{z}^2/2}^2$: $\abs{e^{-q(a_j)}}e^{-a_j^2/2}\to\infty.$ By \eqref{eq:failure-gauge-unitary}, we see that $1\notin\F_\phi^2$. Hence $\norm{f_j}_{\phi,\oplus}^2=2$, whereas \eqref{eq:failure-small-norm} shows that $\norm{S_\phi f_j}_\phi\to0$. Thus $S_\phi$ is not bounded below and does not have closed range. The passage from the Gaussian weight $\abs{z}^2/2$ to $\phi=\abs{z}^2/2+\operatorname{Re}q$ is implemented by a unitary multiplication operator on the holomorphic Fock space, but the multiplication does not preserve pluriharmonic functions in the inherited space.

\section*{Acknowledgements}
The author thanks Jani Virtanen for his interest in this work and his support. The author acknowledges support from the Taussky--Todd Fellowship.

\end{document}